\numberwithin{equation}{section}
 \newtheorem{theorem}{Theorem}[section]
\newtheorem{remark}[theorem]{Remark}
\newtheorem{corollary}{Corollary}[section]
\theoremstyle{definition}
\newcommand{\e}{\end{document}}
\begin{document}

\thispagestyle{empty}

\author{
{{\bf B. S. El-Desouky\footnote{Corresponding author: b\_desouky@yahoo.com} \; and Abdelfattah Mustafa}
\newline{\it{{}  }}
 } { }\vspace{.2cm}\\
 \small \it Department of Mathematics, Faculty of Science, Mansoura University,Mansoura 35516, Egypt
}

\title{New Results on Higher-Order Daehee and Bernoulli Numbers and Polynomials}

\date{}

\maketitle
\small \pagestyle{myheadings}
        \markboth{{\scriptsize New Results on Higher-Order Daehee and Bernoulli Numbers and Polynomials }}
        {{\scriptsize {El-Desouky and Mustafa}}}

\hrule \vskip 8pt

\begin{abstract}
 We derive new matrix representation for higher order Daehee numbers and polynomials, the higher order $\lambda$-Daehee numbers and polynomials and the twisted $\lambda$-Daehee numbers and polynomials of order $k$. This helps us to obtain simple and short proofs of many previous results on higher order Daehee numbers and polynomials. Moreover, we obtained recurrence relation, explicit formulas and some new results for these numbers and polynomials. Furthermore, we investigated the relation between these numbers and polynomials and Stirling numbers, N\"{o}rlund and Bernoulli numbers of higher order. The results of this article gives a generalization of the results derived very recently by El-Desouky and Mustafa \cite{El-DesoukyMustafa2014}.
\end{abstract}

\noindent
{\bf Keywords:}
{\it Daehee numbers, Daehee polynomials, Higher order Daehee numbers, Higher order Daehee polynomials, Higher order Bernoulli polynomials, Matrix representation.}

\noindent
{\bf \it 2010 MSC:}
{\em  05A19,  11C20 , 11B73, 11T06}


\section{Introduction}
For $\alpha \in \mathbb{N}$,  the Bernoulli polynomials of order $\alpha$ are defined by, see \cite{Carlitz1961}-\cite{Ozdenetal2009}
\begin{equation}\label{1}
\left(\frac{t}{e^t-1}\right)^{\alpha} e^{xt}= \sum_{n=0}^\infty B_n^{(\alpha) } (x) \frac{  t^n}{n!}.
\end{equation}

\noindent
When $x=0,B_n^{(\alpha) }=B_n^{(\alpha)} (0)$ are the Bernoulli numbers of order $\alpha$, defined by
\begin{equation}\label{2}
\left( \frac{t}{(e^t-1} \right)^{\alpha}=\sum_{n=0}^\infty B_n^{(\alpha) }  \frac{ t^n}{n!}.
\end{equation}

\noindent
The Daehee polynomials are defined by, see \cite{KimKim2013, KimSimsek2008} and  \cite{Ozdenetal2009}.
\begin{equation} \label{3}
\left( \frac{\log⁡(1+t)}{t} \right) (1+t)^x = \sum_{n=0}^\infty D_n (x) \frac{ t^n}{n!}.
\end{equation}

\noindent
In the special case, $x=0,D_n=D_n (0)$ are called the Daehee numbers, defined by
\begin{equation} \label{4}
\left(\frac{\log⁡(1+t)⁡}{t} \right)=\sum_{n=0}^\infty D_n  \frac{ t^n}{n!}.
\end{equation}

\noindent
The Stirling numbers of the first and second kind are defined, respectively, by
\begin{equation} \label{5}
(x)_n = \prod_{i=0}^n (x-i) = \sum_{l=0}^n s_1 (n,l) x^l,
\end{equation}
where $s_1(n,0)=\delta_{n,0}, \; s_1(n,k)=0, \; \mbox{for} \; k>n,$, and
\begin{equation} \label{5a}
x^n =  \sum_{k=0}^n s_2 (n,k) (x)_k,
\end{equation}
where $s_2(n,0)=\delta_{n,0}, \; s_2(n,k)=0, \; \mbox{for} \; k>n$, and $\delta_{n,k}$ is the kronecker delta.

\noindent
The Stirling numbers of the second kind  have the generating function, see  \cite{Carlitz1961,Comtet1974,El-Desouky1994,El-Desoukyetal2010} and \cite{Gould1972}.
\begin{equation} \label{6}
\left(e^t-1 \right)^m = m! \sum_{l=m}^\infty s_2 (l,m) \frac{  t^l}{l!}.
\end{equation}


\section{Higher order Daehee Numbers and Polynomials}

In this section, we derive an explicit formulas and recurrence relations for the higher order Daehee numbers and polynomials of the first and second kinds. Also the relation between these numbers and N\"{o}rlund numbers are given. Furthermore, we introduce the matrix representation of some results for higher order Daehee numbers and polynomial obtained by Kim et al. \cite{Kimetal2014} in terms of Stirling numbers, N\"{o}rlund numbers and Bernoulli numbers of higher order and investigate a simple and short proofs of these results.

\noindent
Kim et al. \cite{Kimetal2014} defined the Daehee numbers of the first kind of order $k$, by the following generating function
\begin{equation} \label{7}
\sum_{n=0}^\infty D_n^{(k)}   \frac{t^n}{n!}=\left(\frac{\log⁡(1+t)}{t} \right)^k.
\end{equation}

\noindent
Next, an explicit formula for $D_n^{(k) }$ is given by the following theorem.
\begin{theorem}
For $n\in \mathbb{Z},\; k \in \mathbb{N}$, we have
\begin{equation}\label{8}
D_n^{(k) } = n! \sum_{l_1+l_2+ \cdots +l_k=n+k } \frac{(-1)^n}{l_1 l_2 \cdots l_k }.
\end{equation}
\end{theorem}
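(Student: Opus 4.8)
The plan is to read off the coefficients directly from the generating function in \eqref{7} by expanding the $k$-th power as an ordinary product of $k$ identical power series. First I would recall the Mercator series $\log(1+t) = \sum_{m\ge 1} \frac{(-1)^{m-1}}{m} t^m$, which gives, after dividing by $t$,
\begin{equation*}
\frac{\log(1+t)}{t} = \sum_{l\ge 1} \frac{(-1)^{l-1}}{l}\, t^{l-1}.
\end{equation*}
The point of writing the single series with the index $l$ running over the positive integers is that each factor in the $k$-th power will contribute exactly one such index, and the constraint $l_i \ge 1$ in the target formula \eqref{8} is then automatic.

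Next I would raise this series to the $k$-th power. Writing $\left(\frac{\log(1+t)}{t}\right)^k$ as the product of $k$ copies of the series above and distributing, the general term is indexed by a tuple $(l_1,\dots,l_k)$ of positive integers, and equals
\begin{equation*}
\prod_{i=1}^k \frac{(-1)^{l_i-1}}{l_i}\, t^{\,l_i-1}
= \frac{(-1)^{\sum_{i=1}^k (l_i-1)}}{l_1 l_2 \cdots l_k}\; t^{\,\sum_{i=1}^k (l_i-1)}.
\end{equation*}
The exponent of $t$ is $\sum_{i=1}^k(l_i-1) = (l_1+\cdots+l_k)-k$, so the terms contributing to $t^n$ are precisely those with $l_1+\cdots+l_k = n+k$. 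For each such tuple the sign exponent is also $(l_1+\cdots+l_k)-k = n$, so the sign is the common value $(-1)^n$ and may be pulled outside the sum.

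Collecting the coefficient of $t^n$ then yields $\sum_{l_1+\cdots+l_k=n+k} \frac{(-1)^n}{l_1\cdots l_k}$, and comparing with the left-hand side $\sum_{n\ge0} D_n^{(k)} \frac{t^n}{n!}$ of \eqref{7} gives $\frac{D_n^{(k)}}{n!} = \sum_{l_1+\cdots+l_k=n+k} \frac{(-1)^n}{l_1\cdots l_k}$, which is \eqref{8} after multiplying by $n!$. There is no genuine obstacle here; the computation is a direct Cauchy-product expansion. The only points demanding care are the index bookkeeping — verifying that the $t$-exponent and the sign-exponent coincide so that the constant sign $(-1)^n$ factors out cleanly — and confirming that the positivity constraint $l_i\ge1$ inherited from the starting index of $\log(1+t)/t$ matches the summation range in \eqref{8}.
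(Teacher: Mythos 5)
Your proposal is correct and follows essentially the same route as the paper: expand $\log(1+t)$ via the Mercator series, take the $k$-fold Cauchy product, and match coefficients; the only cosmetic difference is that the paper multiplies \eqref{7} through by $t^k$ and compares coefficients of $t^{n+k}$, whereas you absorb the $1/t$ into each factor and compare coefficients of $t^n$ directly. The index bookkeeping (the constraint $l_1+\cdots+l_k=n+k$ and the sign $(-1)^n$) comes out identically in both versions.
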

\begin{proof}
From Eq. (\ref{7}), we have
\begin{equation*}
\sum_{n=0}^\infty D_n^{(k)}   \frac{t^{n+k}}{n!}=\left(\log⁡(1+t) \right)^k=\left( \sum_{l=1}^\infty \frac{(-1)^{l-1}}{l}  t^l \right)^k.
\end{equation*}

\noindent
Using Cauchy rule of product of series, we obtain
\begin{equation*}
\sum_{n=0}^\infty D_n^{(k)}   \frac{t^{n+k}}{n!} = \sum_{r=k}^\infty \sum_{l_1+l_2+ \cdots +l_k=r}^\infty \frac{(-1)^{r-k}}{l_1 l_2 \cdots l_k }  t^r,
\end{equation*}
let $r-k=n$, in the right hand side, we have

\begin{equation*}
\sum_{n=0}^\infty D_n^{(k)}   \frac{t^{n+k}}{n!}= \sum_{n=0}^\infty \sum_{l_1+l_2+ \cdots +l_k=n+k}^\infty \frac{(-1)^n}{l_1 l_2 \cdots l_k }  t^{n+k}.
\end{equation*}
Equating the coefficients of $t^{n+k}$ on both sides yields (\ref{8}). This completes the proof.
\end{proof}

\begin{remark}
It is worth noting that setting $k=1$ in (\ref{8}), we get \cite[ Eq. (2.2)]{El-DesoukyMustafa2014} as a special case.
\end{remark}

\noindent
Kim et al. \cite[2014, Theorem 1]{Kimetal2014} proved that, see \cite{Wang2010}, for $n \in \mathbb{Z}, \; k\in \mathbb{N}$, we have
\begin{equation} \label{9}
D_n^{(k) }=\frac{s_1 (n+k,k)}{\left(^{n+k}_{\; \; \; k} \right)}.
\end{equation}

\noindent
We can represent the Daehee numbers of the first kind of order $k$,  by $(n+1)\times(k+1)$ matrix , $0\leq k \leq n$, as follows
\begin{equation*}
{\bf D}^{(k) }=
\left(
\begin{array}{ccccc}
D_0^{(0)} & D_0^{(1)} & D_0^{(2)} & \cdots & D_0^{(k)} \\
D_1^{(0)} & D_1^{(1)} & D_1^{(2)} & \cdots & D_1^{(k)} \\
\vdots    & \vdots    & \vdots    & \ddots & \vdots    \\
D_n^{(0) }& D_n^{(1) }& D_n^{(2) } & \cdots & D_n^{(k) }
\end{array}
\right).
\end{equation*}
For example if $0\leq n \leq 3,\; 0 \leq k \leq n$, we have
\begin{equation*}
{\bf D}^{(k)}=
\left(
\begin{array}{cccc}
1 & 1 & 1 & 1 \\
0 &-1/2 & -1 &-3/2 \\
0 & 2/3 & 11/6 & 7/2 \\
0 & -3/2 & -5 & -45/4 \\
\end{array}
\right).
\end{equation*}

\noindent
Kim et al. \cite[Theorem 4]{Kimetal2014},  proved the following result.
For $n \in \mathbb{Z}, \; k\in \mathbb{N}$, we have
\begin{equation}\label{10}
B_n^{(k)}=\sum_{m=0}^n D_m^{(k)}  s_2 (n,m).
\end{equation}

\noindent
\begin{remark}
We can write this relation in the matrix form as follows.
\begin{equation}\label{11}
{\bf B}^{(k)}= {\bf S}_2 \, {\bf D}^{(k)},
\end{equation}
\end{remark}

\noindent
where ${\bf D}^{(k) }$  is $(n+1)\times (k+1), \, 0\leq k\leq n,$ matrix for the Daehee numbers of the first kind of order $k$ and  ${\bf S}_2$ is  $(n+1)\times(n+1)$ lower triangular matrix for the Strirling numbers of the second kind and ${\bf B}^{(k) }$ is $(n+1)\times(k+1), \, 0\leq k \leq n,$ matrix for the Bernoulli numbers of order $k$.\\
For example, if setting  $0\leq n \leq 3, \; 0 \leq k \leq n$ , in (\ref{11}), we have
\begin{equation*}
\left(
\begin{array}{cccc}
1 & 0 & 0 & 0 \\
0 & 1 & 0 & 0 \\
0 & 1 & 1 & 0 \\
0 & 1 & 3 & 1
\end{array}
\right)\left(
\begin{array}{ccccc}
1 & 1 & 1 & 1 \\
0 & -1/2 & -1 & -3/2 \\
0 & 2/3 & 11/6 & 7/2 \\
0 & -3/2 & -5 & -45/4
\end{array}
\right)=\left(
\begin{array}{ccccc}
1 & 1 & 1 & 1 \\
0 & -1/2 & -1 &-3/2 \\
0 & 1/6 & 5/6 & 2 \\
0 & 0 & -1/2 & -9/4
\end{array}
\right).
\end{equation*}

\noindent
Kim et al. \cite[Theorem 3]{Kimetal2014} introduced the following result.
For $n\in \mathbb{Z},\;  k\in \mathbb{N}$, we have
\begin{equation} \label{12}
D_n^{(k)}= \sum_{m=0}^n  s_1 (n,m) B_m^{(k)}.
\end{equation}

\noindent
We can write this relation in the matrix form as follows
\begin{equation} \label{13}
{\bf D}^{(k)}={\bf S}_1 \, {\bf B}^{(k)},
\end{equation}

\noindent
where  ${\bf S}_1$ is $(n+1)\times(n+1)$ lower triangular matrix for the Strirling numbers of the first kind.

\noindent
For example, if setting  $0\leq n \leq 3, \; 0 \leq k \leq n$ , in (\ref{13}), we have
\begin{equation*}
\left(
\begin{array}{cccc}
1 & 0 & 0 & 0 \\
0 & 1 & 0 & 0 \\
0 & -1 & 1 & 0 \\
0 & 2 & -3 & 1
\end{array}
\right)\left(
\begin{array}{cccc}
1 & 1 & 1 & 1 \\
0 & -1/2 & -1 & -3/2 \\
0 & 1/6 & 5/6 & 2 \\
0 & 0 & -1/2 & -9/4
\end{array}
\right)=
\left(
\begin{array}{cccc}
1 & 1 & 1 & 1 \\
0 & -1/2 & -1 & -3/2 \\
0 & 2/3 & 11/6 & 7/2 \\
0 & -3/2 & -5 & -45/4
\end{array}
\right).
\end{equation*}

\begin{remark}
Using the matrix form (\ref{13}), we easily derive a short proof of Theorem 4 in Kim et al. \cite{Kimetal2014}. Multiplying both sides by the Striling number of second kind as follows.
\begin{equation*}
{\bf S}_2 \, {\bf D}^{(k) }= {\bf S}_2 \, {\bf S}_1 \,  {\bf B}^{(k)}={\bf I\, B}^{(k)}={\bf B}^{(k) },
\end{equation*}

\noindent
where {\bf I} is the identity matrix of order $(n+1)$.
\end{remark}

\noindent
Kim et al. \cite{Kimetal2014} defined the Daehee polynomials of order $k$ by the generating function as follows.
\begin{equation} \label{14}
\sum_{n=0}^\infty D_n^{(k) }  (x)  \frac{t^n}{n!}=\left(\frac{\log⁡(1+t)}{t}\right)^k (1+t)^x.
\end{equation}

\noindent
Liu and Srivastava \cite{LiuSrivastava2006} define the N\"{o}rlund numbers of the second kind $b_n^{(x) }$ as follows.
\begin{equation} \label{15}
\left(\frac{t}{\log⁡(1+t)} \right)^x = \sum_{n=0}^\infty b_n^{(x) }  t^n.
\end{equation}

\noindent
Next, we find the relation between the Daehee polynomials of order $k$ and the N\"{o}rlund numbers of the second kind $b_n^{(x)}$ by the following theorem.
\begin{theorem}
For $m\in \mathbb{Z},\;  k\in \mathbb{N}$, we have
\begin{equation} \label{16}
D_m^{(k) } (z)=m! \sum_{n=0}^m \left(^{\; \; \;z}_{m-n} \right) b_n^{(-k)}.
\end{equation}
\end{theorem}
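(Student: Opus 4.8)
The plan is to work directly from the generating function (\ref{14}) for the Daehee polynomials of order $k$ and to recognize its leading factor as the N\"{o}rlund generating function (\ref{15}) evaluated at a negative exponent. First I would observe that setting $x=-k$ in (\ref{15}) gives
\[
\left(\frac{t}{\log(1+t)}\right)^{-k}=\left(\frac{\log(1+t)}{t}\right)^{k}=\sum_{n=0}^\infty b_n^{(-k)}t^n,
\]
so the factor $\left(\frac{\log(1+t)}{t}\right)^{k}$ appearing in (\ref{14}) is precisely the ordinary power series whose coefficients are the N\"{o}rlund numbers $b_n^{(-k)}$.

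Next I would expand the remaining factor $(1+t)^z$ by the binomial series, $(1+t)^z=\sum_{j=0}^\infty \binom{z}{j}t^j$, so that the right-hand side of (\ref{14}) becomes a product of two ordinary power series in $t$. Applying the Cauchy rule for the product of series (the same device already used in the proof of Theorem~2.1), the coefficient of $t^m$ in this product equals $\sum_{n=0}^m b_n^{(-k)}\binom{z}{m-n}$.

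Finally I would match this against the left-hand side of (\ref{14}), where the coefficient of $t^m$ is $D_m^{(k)}(z)/m!$. Equating the two coefficients and multiplying through by $m!$ yields (\ref{16}). The computation is entirely routine; the only point requiring care, and the main (modest) obstacle, is fixing the sign of the N\"{o}rlund index correctly---namely confirming that the exponent $-k$ is what converts the reciprocal $t/\log(1+t)$ into the $\log(1+t)/t$ factor demanded by (\ref{14}). Once that identification is made, the result follows immediately by comparing coefficients.
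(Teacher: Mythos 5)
Your proposal is correct and follows essentially the same route as the paper: the paper also combines the N\"{o}rlund generating function (\ref{15}) specialized at exponent $-k$ with the binomial expansion of $(1+t)^z$, forms the Cauchy product, and compares coefficients of $t^m$ against (\ref{14}). The only cosmetic difference is that the paper multiplies (\ref{15}) by $(1+t)^z$ for general $x$ first and substitutes $x=-k$ afterwards, whereas you specialize the exponent first; the two orderings are interchangeable.
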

\begin{proof}
From Eq. (\ref{15}), by multiplying both sides by $(1+t)^z$, we have
\begin{eqnarray}\label{17}
\left(\frac{t}{\log⁡(1+t)}\right)^x (1+t)^z & = & \sum_{n=0}^\infty b_n^{(x)}  t^n (1+t)^z =  \sum_{n=0}^\infty b_n^{(x) }  t^n \sum_{i=0}^\infty \left(^z_i \right) t^i
\nonumber\\
&= & \sum_{n=0}^\infty \sum_{m=n}^\infty b_n^{(x)} \left(^{\;\; \;z}_{m-n} \right) t^m
=  \sum_{m=0}^\infty \sum_{n=0}^m \left(^{\; \;\; z}_{m-n}\right) b_n^{(x) } t^m.
\end{eqnarray}

\noindent
Replacing $x$ by $-k$ in (\ref{17}), we have
\begin{eqnarray} \label{18}
\left(\frac{\log⁡(1+t)}{t}\right)^k (1+t)^z & = & \sum_{m=0}^\infty \sum_{n=0}^m \left(^{\; \;\;z}_{m-n} \right) b_n^{(-k)} t^m.
\nonumber\\
& = & \sum_{m=0}^\infty m! \sum_{n=0}^m \left(^{\;\; \; z}_{m-n} \right) b_n^{(-k) } \frac{ t^m}{m!}.
\end{eqnarray}

\noindent
From  (\ref{14}) and (\ref{18}), we have (\ref{16}). This completes the proof.
\end{proof}

\noindent
\begin{corollary}
Setting  $k=1$ in (\ref{16}) we have
\begin{equation} \label{19}
D_m (z) = m! \sum_{n=0}^m \left(^{\; \; \; z}_{m-n} \right) b_n^{(-1)}.
\end{equation}
\end{corollary}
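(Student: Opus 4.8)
The plan is to obtain this corollary as the immediate specialization $k=1$ of the preceding theorem, whose conclusion is (\ref{16}), together with the single preliminary observation that the order-one Daehee polynomial coincides with the ordinary Daehee polynomial of (\ref{3}). Since that theorem was established for every $k\in\mathbb{N}$, it applies in particular at $k=1$, and substituting $k=1$ directly into (\ref{16}) yields
\[
D_m^{(1)}(z)=m!\sum_{n=0}^m\binom{z}{m-n}b_n^{(-1)}.
\]

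Next I would identify the left-hand side with $D_m(z)$. Setting $k=1$ in the defining generating function (\ref{14}) gives
\[
\sum_{n=0}^\infty D_n^{(1)}(x)\,\frac{t^n}{n!}=\left(\frac{\log(1+t)}{t}\right)(1+t)^x,
\]
which is precisely the generating function (\ref{3}) for the ordinary Daehee polynomials $D_n(x)$. Comparing the coefficients of $t^n/n!$ on both sides then forces $D_m^{(1)}(z)=D_m(z)$ for every $m\in\mathbb{Z}$. Inserting this identity into the previous display produces exactly (\ref{19}), completing the argument.

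I expect no genuine obstacle here: the entire substance of the result is the case $k=1$ of a theorem already proved, and the right-hand side is literally the $k=1$ instance of its right-hand side, with $b_n^{(-k)}$ becoming $b_n^{(-1)}$. The only point that warrants an explicit line is the coincidence $D_m^{(1)}(z)=D_m(z)$, which is apparent by matching generating functions (\ref{14}) and (\ref{3}); everything else is a routine specialization.
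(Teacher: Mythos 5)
Your proposal is correct and matches the paper's (implicit) argument: the corollary is exactly the specialization $k=1$ of (\ref{16}), and the paper states it without further proof. Your extra line identifying $D_m^{(1)}(z)$ with $D_m(z)$ by comparing the $k=1$ case of the generating function (\ref{14}) with (\ref{3}) is a sound and welcome explicit justification of a step the paper leaves tacit.
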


\noindent
Setting $z=0$, in (\ref{16}), we have the following relation between Daehee numbers of higher order and N\"{o}rlund numbers of the second kind.
\begin{corollary}.
For $k\in \mathbb{N}$, by setting $z=0$ in (\ref{16})  we obtain
\begin{equation} \label{20}
D_m^{(k)}=m!\, b_m^{(-k)}.
\end{equation}
\end{corollary}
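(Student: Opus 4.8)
The plan is to obtain (\ref{20}) as the degenerate case of Theorem 16 by specializing the variable $z$ to zero. Since Corollary (\ref{20}) is stated immediately after (\ref{16}) and explicitly invokes the substitution $z=0$, the cleanest route is to start from the established identity
\begin{equation*}
D_m^{(k)}(z)=m!\sum_{n=0}^m \binom{z}{m-n} b_n^{(-k)},
\end{equation*}
and track what happens to the binomial coefficients $\binom{z}{m-n}$ when $z\to 0$.

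First I would recall that by definition (\ref{3})--(\ref{4}), setting $x=0$ in the polynomial turns it into the corresponding number, so $D_m^{(k)}(0)=D_m^{(k)}$; this is the left-hand side after substitution. Next I would examine the generalized binomial coefficient $\binom{0}{m-n}=\frac{(0)(0-1)\cdots(0-m+n+1)}{(m-n)!}$. The key observation is that this product contains the factor $0$ precisely when $m-n\geq 1$, so $\binom{0}{m-n}=0$ for all $n<m$, while $\binom{0}{0}=1$. Hence the entire sum over $n$ collapses to its single surviving term $n=m$, giving $m!\,\binom{0}{0}\,b_m^{(-k)}=m!\,b_m^{(-k)}$. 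Equating the two sides yields (\ref{20}).

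The only subtlety worth stating carefully is the evaluation $\binom{z}{j}\big|_{z=0}=\delta_{j,0}$, which is exactly the Kronecker-delta collapse that makes the sum telescope to one term; everything else is a direct substitution into a result already proved. I expect no genuine obstacle here, as this is a routine specialization, so the main point to get right is simply the vanishing of the binomial coefficients for $m-n\geq 1$ and the retention of the $n=m$ term. For completeness one could alternatively derive (\ref{20}) directly from the generating function (\ref{14}) by setting $x=0$ so that $(1+t)^x=1$, matching $\left(\frac{\log(1+t)}{t}\right)^k$ against (\ref{15}) with $x$ replaced by $-k$ and reading off coefficients of $t^m$, but the specialization of Theorem 16 is shorter and is clearly the intended argument.
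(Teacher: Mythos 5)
Your proposal is correct and matches the paper's (implicit) argument exactly: the paper presents \eqref{20} as the immediate specialization $z=0$ of \eqref{16}, and the collapse of the sum via $\binom{0}{m-n}=\delta_{m,n}$ together with $D_m^{(k)}(0)=D_m^{(k)}$ (from \eqref{7} and \eqref{14}) is precisely the intended reasoning. No gaps.
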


\noindent
The relation between the Bernoulli numbers and Bernoulli polynomials of order k are given by Kimura \cite{Kimura2003}, as follows.
\begin{equation} \label{21}
B_n^{(k)} (x) = \sum_{j=0}^n \left(^n_j\right)  B_j^{(k)} x^{n-j}.
\end{equation}

\noindent
Therefore, we can represent (\ref{21}) in the matrix form
\begin{equation} \label{22}
{\bf B}^{(k)} (x)= {\bf P}(x) \,  {\bf B}^{(k) },
\end{equation}

\noindent
where ${\bf B}^{(k)} (x)$  is $(n+1)\times(k+1)$ matrix, $0\leq k \leq n$ for Bernoulli polynomials of order $k$ as follows
\begin{equation*}
{\bf B}^{(k) } (x)=
\left(
\begin{array}{ccccc}
B_0^{(0)}(x) & B_0^{(1)}(x) & B_0^{(2)}(x) & \cdots & B_0^{(k)}(x)\\
B_1^{(0)}(x) & B_1^{(1)}(x) & B_1^{(2)}(x) & \cdots & B_1^{(k)}(x)\\
\vdots       & \vdots       &  \vdots      &\ddots  &\vdots\\
B_n^{(0)}(x) & B_n^{(1)}(x) & B_n^{(2)}(x) & \cdots & B_n^{(k)}(x))
\end{array}
\right),
\end{equation*}
where the column $k$ represents the Bernoulli polynomials of order $k$, ${\bf B}^{(k)}$ is $(n+1)\times(k+1)$ matrix, $0\leq k \leq n$ for Bernoulli numbers of order $k$ and the matrix ${\bf P}(x)$, the Pascal matrix, is $(n+1)\times(n+1)$ lower triangular matrix defined by
\begin{equation*}
({\bf P}(x))_{ij}=
\left\{
\begin{array}{cl}
\left(^i_j \right) x^{i-j}, &i\geq j,\\
0, & \mbox{otherwise}
\end{array}
\right.
, \;   i,j=0,1,\cdots,n.
\end{equation*}

\noindent
For example if setting $0 \leq n \leq 3, \; 0 \leq k \leq n$ in (\ref{22}), we have
\begin{eqnarray*}
& &
\left(
\begin{array}{cccc}
1 & 0 & 0 & 0 \\
x & 1 & 0 & 0 \\
x^2 & 2x & 1 & 0 \\
x^3 & 3x^2 & 3x & 1
\end{array}
\right)\left(
\begin{array}{cccc}
1 & 1 & 1 & 1 \\
0 & -1/2 & -1 &-3/2 \\
0 & 1/6 & 5/6 & 2 \\
0 & 0 & -1/2 & -9/4
\end{array}
\right)=
\\
&&
\; \; \; \; \; \; \; \; \; \; \; \; \; \; \; \;\; \; \; \; \; \; \; \; \; \; \; \; \; \; \; \;
\left(
\begin{array}{cccc}
1 & 1 & 1 & 1 \\ x & x- \frac{1}{2} & x-1 & x-\frac{3}{2} \\ x^2 & x^2-x+\frac{1}{6} & x^2-2x+\frac{5}{6} & x^2-3x+2 \\ x^3 & x^3-\frac{3}{2}x^2 + \frac{1}{2}x & x^3-3x^2+\frac{5}{2}x-\frac{1}{2} & x^3-\frac{9}{2}x^2+6x-\frac{9}{4}
\end{array}
\right).
\end{eqnarray*}

\noindent
Kim et al. \cite[Theorem 5]{Kimetal2014} introduced the following result. For $n\in \mathbb{Z}, \; k\in \mathbb{N}$,
\begin{equation} \label{23}
D_n^{(k)} (x) = \sum_{m=0}^n  s_1 (n,m) B_m^{(k)} (x).
\end{equation}

\noindent
We can write this relation in the matrix form as follows
\begin{equation} \label{24}
{\bf D}^{(k)} (x) = {\bf S}_1 \,  {\bf B}^{(k)} (x),
\end{equation}

\noindent
where ${\bf D}^{(k) } (x)$ is $(n+1)\times (k+1)$ matrix for the Daehee polynomials of the first kind with order $k$ and  ${\bf B}^{(k) } (x)$ is $(n+1)\times(k+1)$ matrix for the Bernoulli polynomials of order $k$.

\noindent
For example, if setting  $0 \leq n \leq 3, \; 0 \leq k \leq n$ , in (\ref{24}), we have
\begin{eqnarray*}
\left(
\begin{array}{cccc}
1 & 0 & 0 & 0 \\
0 & 1 & 0 & 0 \\
0 & -1 & 1 & 0  \\
0 & 2 & -3 & 1
\end{array}
\right)\left(
\begin{array}{cccc}
1 & 1 & 1 & 1 \\
x & x-\frac{1}{2} & x-1 & x-\frac{3}{2} \\
x^2 & x^2-x+\frac{1}{6} & x^2-2x+\frac{5}{6} & x^2-3x+2 \\
x^3 & x^3-\frac{3}{2}x^2+\frac{1}{2} x & x^3-3x^2+\frac{5}{2} x-\frac{1}{2} & x^3-\frac{9}{2}x^2+6x-\frac{9}{4}
\end{array}
\right)=
\\
\; \; \; \; \; \; \; \;
\left(
\begin{array}{cccc}
1 & 1 & 1 & 1 \\
x & x-\frac{1}{2} & x-1 & x-\frac{3}{2}  \\
x^2-x & x^2-2x+\frac{2}{3} & x^2-3x+\frac{11}{6} & x^2-4x+\frac{7}{2} \\
x^3-3x^2+2x & x^3-\frac{9}{2}x^2+\frac{11}{2}x-\frac{3}{2} & x^3-6x^2+\frac{21}{2}x-5 & x^3-\frac{15}{2}x^2 + 17x-\frac{45}{4}\\
\end{array}
\right)
\end{eqnarray*}

\noindent
Kim et al. \cite[Theorem 7]{Kimetal2014} introduced the following result. For $n\in \mathbb{Z}, \; k\in \mathbb{N}$,
\begin{equation} \label{25}
B_n^{(k) } (x) = \sum_{m=0}^n D_m^{(k) } (x) s_2 (n,m).
\end{equation}

\noindent
We can write Eq. (\ref{25}) in the matrix form as follows
\begin{equation}\label{26}
{\bf B}^{(k) } (x)={\bf S}_2 \, {\bf D}^{(k) } (x).
\end{equation}

\noindent
For example, if setting  $0≤\leq n \leq 3, \; 0 \leq k \leq n$ , in (\ref{26}), we have
{\footnotesize
\begin{eqnarray*}
\left(
\begin{array}{cccc}
1 & 0 & 0 & 0 \\
0 & 1 & 0 & 0 \\
0 & 1 & 1 & 0 \\
0 & 1 & 3 & 1
\end{array}
\right)
\left(
\begin{array}{cccc}
1 & 1 & 1 & 1 \\
x & x-\frac{1}{2} & x-1 & x- \frac{3}{2} \\
x^2-x & x^2-2x+\frac{2}{3} & x^2-3x+\frac{11}{6} & x^2-4x+\frac{7}{2} \\
x^3-3x^2+2x & x^3-\frac{9}{2} x^2+\frac{11}{2}x-\frac{3}{2} & x^3-6x^2+\frac{21}{2}x-5 & x^3-\frac{15}{2}x^2+17x-\frac{45}{4}
\end{array}
\right)
\\
= \left(
\begin{array}{cccc}
1 & 1 & 1 & 1 \\
x & x-\frac{1}{2} & x-1 & x-\frac{3}{2} \\
x^2 & x^2-x + \frac{1}{6} & x^2-2x+\frac{5}{6} &x^2-3x+2\\
x^3 & x^3-\frac{3}{2}x^2+\frac{1}{2} x & x^3-3x^2+\frac{5}{2}x-\frac{1}{2} & x^3-\frac{9}{2}x^2+6x-\frac{9}{4}
\end{array}
\right).
\end{eqnarray*}
}
\begin{remark} We can prove Theorem 7 in Kim et al. \cite{Kimetal2014} by using the matrix form (\ref{24}) as follows . Multiplying both sides of (\ref{24}) by the Striling number of second kind, we have
\begin{equation*}
{\bf S}_2 \, {\bf D}^{(k)} (x) ={\bf S}_2 \, {\bf S}_1 \,  {\bf B}^{(k)} (x)={\bf I\, B}^{(k)} (x)={\bf B}^{(k)} (x).
\end{equation*}
\end{remark}

\noindent
Kim et al.  \cite{Kimetal2014} defined the Daehee numbers of the second kind of order $k$ by the generating function as follows.
\begin{equation} \label{27}
\sum_{n=0}^\infty \hat{D}_n^k [(k)] \frac{ t^n}{n!}=\left(\frac{(1-t)  \log⁡(1-t)}{-t} \right)^k.
\end{equation}

\noindent
Kim et al.  \cite[Theorem 8]{Kimetal2014} introduced the following result. For  $n\in \mathbb{Z}, \; k \in \mathbb{N}$,
\begin{equation} \label{28}
\hat{D}_n^k [(k)]=\sum_{l=0}^n \Big[^{\; n}_{\; l}\Big] B_l^{(k) },
\end{equation}

\noindent
where $\Big[^{\; n}_{\; l} \Big]=(-1)^{n-l} s_1 (n,l)=|s_1(n,k)|=\mathfrak{s}(n,k)$, where $\mathfrak{s}(n,k)$ is the signless Stirling numbers of the first kind, see \cite{Comtet1974} and \cite{El-Desouky1994, El-Desoukyetal2010}.

\noindent
We can write this theorem in the matrix form as follows
\begin{equation}\label{29}
\hat{{\bf D}}^{(k)}= {\bf \mathfrak{S}} \, {\bf B}^{(k) },
\end{equation}

\noindent
where $\hat{{\bf D}}^{(k) }$  is $(n+1)\times(k+1)$  matrix of Daehee numbers of the second kind with order $k$ and ${\bf \mathfrak{S}}$ is $(n+1)\times(n+1)$ lower triangular matrix for the signless Stirling numbers of first kind.

\noindent
For example, if setting $0 \leq n \leq 3,\; 0 \leq k \leq n$ in (\ref{29}), we have
\begin{equation*}
\left(
\begin{array}{cccc}
1 & 0 & 0 & 0 \\
0 & 1 & 0 & 0 \\
0 & 1 & 1 & 0 \\
0 & 2 & 3 & 1
\end{array}
\right)\left(
\begin{array}{cccc}
1 & 1 & 1 & 1 \\
0 & -1/2 & -1 & -3/2 \\
0 & 1/6 & 5/6 & 2 \\
0 & 0 & -1/2 & -9/4
\end{array}
\right)=\left(
\begin{array}{cccc}
1	& 1	& 1 &	1\\
0	& -1/2	& -1	& -3/2\\
0	& -1/3	& -1/6	& 1/2\\
0	& -1/2	& 0	& 3/4
\end{array}
\right).
\end{equation*}

\noindent
Kim et al.  \cite[Theorem 9]{Kimetal2014} introduced the following result.
For  $n\in \mathbb{Z},\; k\in \mathbb{N}$, we have
\begin{equation} \label{30}
B_n^{(k)}= \sum_{m=0}^n (-1)^{n-m} s_2 (n,m)  \hat{D}_m^k [(k)].
\end{equation}

\noindent
We can write Eq. (\ref{30}) in the matrix form as follows
\begin{equation} \label{31}
{\bf B}^{(k)}=  \tilde{\bf S}_2 \, \hat{{\bf D}}^{(k)}.
\end{equation}

\noindent
where $\tilde{\bf S}_2$ is $(n+1)\times(n+1)$ lower triangular matrix for signed Stirling numbers of the second kind defined by
\begin{equation*}
({\tilde{\bf S}}_2)_{ij}=
\left\{
\begin{array}{cl}
(-1)^{i-j} s_2(i,j), & i\geq j, \\
0, & \mbox{otherwise}.
\end{array}
\right.,
\; i,j=0,1,\cdots,n.
\end{equation*}

\noindent
For example, if setting  $0\leq n \leq 3,\; 0 \leq k \leq n$ in (\ref{31}), we have
\begin{equation*}
\left(
\begin{array}{cccc}
1 & 0 & 0 & 0 \\
0 & 1 & 0 & 0 \\
0 & -1 & 1 & 0 \\
0 & 1 & -3 & 1
\end{array}
\right)
\left(
\begin{array}{cccc}
1	& 1	& 1 &	1\\
0	& -1/2	& -1	& -3/2\\
0	& -1/3	& -1/6	& 1/2\\
0	& -1/2	& 0	& 3/4
\end{array}
\right)=
\left(
\begin{array}{cccc}
1 & 1 & 1 & 1 \\
0 & -1/2 & -1 & - 3/2 \\
0 & 1/6 & 5/6 & 2 \\
0 & 0 & - 1/2 & - 9/4
\end{array}
\right).
\end{equation*}

\noindent
\begin{remark}
We can prove Theorem 9 in Kim et al. \cite{Kimetal2014} by using the matrix form (\ref{29}) as follows.\\
Multiplying both sides of (\ref{29}) by the matrix of sign Striling numbers of second kind $\tilde{\bf S}_2$ we have
\begin{equation*}
\tilde{\bf S}_2 \, \hat{{\bf D}}^{(k) }=\tilde{\bf S}_2 \, {\bf \mathfrak{S}} \, {\bf B}^{(k)}={\bf I} \, {\bf B}^{(k)}= {\bf B}^{(k)},
\end{equation*}

\noindent
we obtain Eq. (\ref{31}), where we used the identity,  $\tilde{\bf S}_2 \, {\bf \mathfrak{S}}={\bf I}$.
\end{remark}

\noindent
Kim et al.  \cite{Kimetal2014} defined the Daehee polynomials of the second kind of order $k$ by the generating function as follows.
\begin{equation} \label{32}
\sum_{n=0}^\infty \hat{D}_n^k [(k)] (x)  \frac{t^n}{n!}=\left(\frac{(1-t)  \log⁡(1-t)}{-t} \right)^k (1-t)^x.
\end{equation}

\noindent
Kim et al.  \cite[Eq. (31)]{Kimetal2014} introduced the following result. For  $n\in \mathbb{Z},\; k\in \mathbb{N}$,
\begin{equation} \label{33a}
\hat{D}_n^k [(k)](x)= \sum_{m=0}^n (-1)^{n-m} s_1 (n,m) B_m^{(k) } (-x).
\end{equation}

\noindent
Eq. (\ref{33a}) is equivalent to

\begin{equation} \label{33}
\hat{D}_n^k [(k)](x)= \sum_{m=0}^n  \mathfrak{s} (n,m) B_m^{(k) } (-x).
\end{equation}

\noindent
We can write Eq. (\ref{33}) in the matrix form as follows
\begin{equation} \label{34}
\hat{{\bf D}}^{(k) } (x) = {\bf \mathfrak{S}} \,  {\bf B}^{(k)} (-x),
\end{equation}

\noindent
where  $\hat{{\bf D}}^{(k) } (x) $ is $(n+1)\times(k+1)$ matrix of the Daehee polynomials of the second kind with order $k$ and ${\bf B}^{(k) } (x)$ is $(n+1)\times(k+1)$, the matrix of the Bernoulli polynomials when $x\rightarrow -x $ numbers.

\noindent
For example, if setting  $0 \leq n \leq 3,\; 0 \leq k \leq n,$ in (\ref{34}), we have
\begin{eqnarray*}
\left(
\begin{array}{cccc}
1 & 0 & 0 & 0 \\
0 & 1 & 0 & 0 \\
0 & 1 & 1 & 0 \\
0 & 2 & 3 & 1
\end{array}
\right)
\left(
\begin{array}{cccc}
1 & 1 & 1 & 1 \\
-x & -x-\frac{1}{2} & -x-1 & -x-\frac{3}{2} \\
x^2 & x^2+x+\frac{1}{6} & x^2+2x+\frac{5}{6} & x^2+3x+2 \\
-x^3 & -x^3-\frac{3}{2}x^2-\frac{1}{2}x & -x^3-3x^2-\frac{5}{2}x-\frac{1}{2} & -x^3-\frac{9}{2}x^2-6x-\frac{9}{4}
\end{array}
\right)
\\
=
\left(
\begin{array}{cccc}
1 &	1 &	1 &	1 \\
-x	& -x-\frac{1}{2}	& -x-1 &	-x-\frac{3}{2} \\
x^2-x &	x^2-\frac{1}{3} &	x^2+x-\frac{1}{6} &	x^2+2x+\frac{1}{2} \\
3x^2-x^3-2x &	\frac{3x^2}{2}-x^3+\frac{x}{2}-\frac{1}{2} &	\frac{3x}{2}-x^3 &	x-\frac{3x^2}{2}-x^3+ \frac{3}{4}
\end{array}
\right).
\end{eqnarray*}

\noindent
Kim et al.  \cite[Theorem 11]{Kimetal2014} introduced the following result. For  $n\in \mathbb{Z}, \;k\in \mathbb{N}$,
\begin{equation} \label{35}
B_n^{(k)} (-x) = \sum_{m=0}^n (-1)^{n-m} s_2 (n,m) \hat{D}_m^k [(k)](x).
\end{equation}

\noindent
We can write Eq. (\ref{35}) in the matrix form as follows
\begin{equation}\label{36}
{\bf B}^{(k) } (-x)= \tilde{\bf S}_2 \, \hat{{\bf D}}^{(k)} (x).
\end{equation}

\noindent
For example, if setting  $0\leq n \leq 3, \; 0 \leq k \leq n$ in (\ref{36}), we have
\begin{eqnarray*}
\left(
\begin{array}{cccc}
1	& 0	 & 0  & 0 \\
0	& 1	 & 0  & 0  \\
0	& -1 & 1  & 0 \\
0	& 1	 & -3 & 1 \\
\end{array}
\right)
\left(
\begin{array}{cccc}
1 &	1 &	1 &	1 \\
-x	& -x-\frac{1}{2}	& -x-1 &	-x-\frac{3}{2} \\
x^2-x &	x^2-\frac{1}{3} &	x^2+x-\frac{1}{6} &	x^2+2x+\frac{1}{2} \\
3x^2-x^3-2x &	\frac{3x^2}{2}-x^3+\frac{x}{2}-\frac{1}{2} &	\frac{3x}{2}-x^3 &	x-\frac{3x^2}{2}-x^3+ \frac{3}{4}
\end{array}
\right)
\\
= \left(
\begin{array}{cccc}
1 & 1 & 1 & 1 \\
-x & -x-\frac{1}{2} & -x-1 & -x-\frac{3}{2} \\
x^2 & x^2+x+\frac{1}{6} & x^2+2x+\frac{5}{6} & x^2+3x+2 \\
-x^3 & -x^3-\frac{3x^2}{2}-\frac{x}{2} & -x^3-3x^2-\frac{5x}{2}-\frac{1}{2} & -x^3-\frac{9x^2}{2}-6x-\frac{9}{4}
\end{array}
\right).
\end{eqnarray*}

\begin{remark}
We can prove Eq. (\ref{36}), \cite[Theorem 11]{ Kimetal2014}, directly by using the matrix form (\ref{34}) as follows. Multiplying both sides of (\ref{34}) by  $\tilde{\bf S}_2$ as follows.
\begin{equation*}
\tilde{\bf S}_2 \, \hat{{\bf D}}^{(k)} (x)=\tilde{\bf S}_2 \, {\bf \mathfrak{S}} \, {\bf B}^{(k)} (-x)={\bf I} \, {\bf B}^{(k)} (-x)={\bf B}^{(k)} (-x),
\end{equation*}
thus, we have Eq. (\ref{36}).
\end{remark}

\section{The $\lambda$- Daehee Numbers and Polynomials of Higher Order}
In this section we introduce the matrix representation for the $\lambda$-Daehee numbers and polynomials of higher order given by  Kim  et al. \cite{Kimetal2013}. Hence, we can derive these results in matrix representation and prove these results simply by using the given matrix forms.

\noindent
The $\lambda$-Daehee polynomials of the first kind with order $k$ can be defined by the generating function
\begin{equation} \label{37}
\left(\frac{\lambda \log⁡(1+t)}{(1+t)^{\lambda}-1} \right)^k  (1+t)^x = \sum_{n=0}^\infty D_{n,\lambda}^{(k)} (x) \frac{ t^n}{n!}.
\end{equation}

\noindent
When $x=0,\; D_{n,\lambda}^{(k)}=D_{n, \lambda}^{(k)} (0)$ are called the $\lambda$-Daehee numbers of order $k$.
\begin{equation} \label{ln}
\left(\frac{\lambda  \log⁡(1+t)}{(1+t)^{\lambda}-1} \right)^k  = \sum_{n=0}^\infty D_{n,\lambda}^{(k)} \frac{t^n}{n!}.
\end{equation}

\noindent
It is easy to see that $D_n^{(k) } (x) = D_{n,1}^{(k) } (x)$ and $D_{n,\lambda} (x)=D_{n,\lambda}^{(1) } (x)$.

\noindent
Kim et al. \cite[Theorem 3]{Kimetal2013} obtained the following results. For  $n\geq 0, \;k \in \mathbb{N}$,
\begin{equation} \label{38}
D_{n,\lambda}^{(k)} (x)= \sum_{m=0}^n s_1 (n,m) \lambda^m B_m^{(k)} \left(\frac{x}{\lambda} \right),
\end{equation}
and
\begin{equation} \label{39}
\lambda^n B_n^{(k)} \left(\frac{x}{\lambda}\right)=\sum_{m=0}^n s_2 (n,m) D_{m,\lambda}^{(k) } (x),
\end{equation}

\noindent
we can write these results in the following matrix forms
\begin{equation} \label{40}
{\bf D}_{\lambda}^{(k)} (x)= {\bf S}_1 \, {\bf \Lambda \,  B}^{(k) } \left(\frac{x}{\lambda} \right),
\end{equation}
and
\begin{equation} \label{41}
 {\bf \Lambda}\,  {\bf B}^{(k)} \left(\frac{x}{\lambda} \right)= {\bf S}_2\,  {\bf D}_{\lambda}^{(k) } (x),
\end{equation}

\noindent
where, ${\bf D}_{\lambda}^{(k)} (x)$ is $(n+1)\times (k+1)$ matrix for the $\lambda$-Daehee polynomials of the first kind with order $k$, ${\bf B}^{(k)} (x/\lambda)$ is $(n+1)\times(k+1)$ matrix for the Bernoulli polynomials of order $k$, when $x\rightarrow x/\lambda$ and ${\bf \Lambda}$ is $(n+1)\times(n+1)$ diagonal matrix with elements, $({\bf \Lambda})_{ii}=\lambda^i,\;    i=j=0,1,\cdots,n$.

\noindent
For example, if setting  $0 \leq n \leq 3,\; 0\leq k \leq n$, in (\ref{40}), we have
{\footnotesize
\begin{eqnarray*}
&&
\left(
\begin{array}{cccc}
1 & 0 & 0 & 0 \\
0 & 1 & 0 & 0 \\
0 & -1 & 1 & 0 \\
0 & 2 & -3 & 1
\end{array}
\right)
\left(
\begin{array}{cccc}
1 & 0 & 0 & 0 \\
0 & \lambda & 0 & 0 \\
0 & 0 & \lambda^2 & 0 \\
0 & 0 & 0 & \lambda^3
\end{array}
\right) \times
\\
&&
\left(
\begin{array}{cccc}
1 & 1 & 1 & 1 \\
\frac{x}{\lambda} & \frac{x}{\lambda}-\frac{1}{2} & \frac{x}{\lambda}-1 & \frac{x}{\lambda}-\frac{3}{2} \\
\frac{x^2}{\lambda^2} & \frac{x^2}{\lambda^2} -\frac{x}{\lambda} + \frac{1}{6} & \frac{x^2}{\lambda^2} -\frac{2x}{\lambda} + \frac{5}{6} & \frac{x^2}{\lambda^2} -\frac{3 x}{\lambda} + 2 \\
\frac{x^3}{\lambda^3} & \frac{x}{2\lambda}-\frac{3x^2}{2\lambda^2} + \frac{x^3}{\lambda^3} & \frac{5x}{2\lambda}- \frac{3x^2}{\lambda^2} + \frac{x^3}{\lambda^3} -\frac{1}{2} & \frac{6x}{\lambda}-\frac{9x^2}{2\lambda^2} + \frac{x^3}{\lambda^3} -\frac{9}{4}
\end{array}
\right)=
\\
&&
\left(
\begin{array}{cccc}
1 & 1 & 1 & 1\\
x & x-\frac{\lambda}{2} & x-\lambda & x-\frac{3\lambda}{2} \\
x(x-1)& \frac{\lambda^2}{6} - \lambda x+  \frac{\lambda}{2} + x^2-x & \frac{5\lambda^2}{6}- 2\lambda x + \lambda + x^2-x & 2\lambda^2 - 3 \lambda x + \frac{3\lambda}{2} + x^2-x \\
D_{3,\lambda}^{(0)}(x)& D_{3,\lambda}^{(1)}(x) & D_{3,\lambda}^{(2)}( x) & D_{3,\lambda}^{(3)}(x)
\end{array}
\right).
\end{eqnarray*}
}
where
\begin{eqnarray*}
D_{3,\lambda}^{(0)}(x)& = &x(x-1)(x-2), \;
D_{3,\lambda}^{(1)}(x)=- \frac{1}{2}(\lambda -2x+2)(\lambda-2x + x^2-\lambda x),
\\
D_{3,\lambda}^{(2)}(x) & = &-\frac{1}{2} (\lambda -x+1)(\lambda^2- 4\lambda x + 4\lambda + 2x^2-4x),
\\
D_{3,\lambda}^{(3)}(x)& = & -\frac{1}{4} (3\lambda-2x+2)(3\lambda^2-6\lambda x+6 \lambda + 2x^2-4x).
\end{eqnarray*}

\begin{remark} In fact, we can prove Eq. (\ref{41}), simply by multiplying Eq. (\ref{40}) by ${\bf S}_2$ as follows.
\begin{equation*}
{\bf S}_2 \, {\bf D}_{\lambda}^{(k)} (x)={\bf S}_2 \, {\bf S}_1 \,  {\bf \Lambda B}^{(k)} \left(\frac{x}{\lambda} \right)={\bf I \, \Lambda B}^{(k) } \left(\frac{x}{\lambda} \right)={\bf \Lambda \,  B}^{(k)} \left(\frac{x}{\lambda} \right).
\end{equation*}
\end{remark}

\noindent
The following Theorem gives the relation between the Daehee polynomials of higher order and $\lambda$-Daehee polynomials of higher order.
\begin{theorem}
For $m \geq 0$, we have
\begin{equation} \label{42}
D_{m,\lambda}^{(k) } (\lambda x)=m! \sum_{n=0}^m \sum_{ i_1+i_2+ \cdots+i_n=m} \frac{D_n^{(k)}(x)}{n!} \left(^{\lambda}_{i_1} \right) \left(^{\lambda}_{i_2}\right) \cdots \left(^{\lambda}_{i_n} \right).
\end{equation}
\end{theorem}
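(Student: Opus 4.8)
The plan is to pass to generating functions and exploit a single change of variable that turns the $\lambda$-Daehee generating function into the ordinary Daehee one. First I would write the left-hand sequence as the coefficients of the generating function obtained by substituting $\lambda x$ for $x$ in the defining relation (\ref{37}), namely
\[
\sum_{m=0}^\infty D_{m,\lambda}^{(k)}(\lambda x)\,\frac{t^m}{m!}
=\left(\frac{\lambda \log(1+t)}{(1+t)^{\lambda}-1}\right)^{k}(1+t)^{\lambda x}.
\]

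The key step is the substitution $u=(1+t)^{\lambda}-1$, so that $1+u=(1+t)^{\lambda}$. Under this substitution $\lambda\log(1+t)=\log(1+u)$ and $(1+t)^{\lambda x}=(1+u)^{x}$, so the right-hand side becomes exactly $\left(\frac{\log(1+u)}{u}\right)^{k}(1+u)^{x}$, which by (\ref{14}) equals $\sum_{n=0}^\infty D_n^{(k)}(x)\,\frac{u^n}{n!}$. This yields the identity
\[
\sum_{m=0}^\infty D_{m,\lambda}^{(k)}(\lambda x)\,\frac{t^m}{m!}
=\sum_{n=0}^\infty \frac{D_n^{(k)}(x)}{n!}\,u^{n},
\qquad u=(1+t)^{\lambda}-1.
\]

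Next I would expand $u^{n}=\big((1+t)^{\lambda}-1\big)^{n}$. Using the binomial series $(1+t)^{\lambda}-1=\sum_{i=1}^\infty \binom{\lambda}{i}t^{i}$ together with Cauchy's product rule---exactly as in the derivation of (\ref{8})---one obtains
\[
u^{n}=\sum_{m=n}^\infty\Bigg(\sum_{i_1+i_2+\cdots+i_n=m}\binom{\lambda}{i_1}\binom{\lambda}{i_2}\cdots\binom{\lambda}{i_n}\Bigg)t^{m},
\]
the summation starting at $m=n$ because each $i_j\geq 1$. Substituting this expansion into the identity above, interchanging the order of the two summations, and equating coefficients of $t^{m}$ on both sides gives (\ref{42}).

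The main obstacle is bookkeeping rather than substance: one must justify the interchange of summation (routine for formal power series) and check that the constraint $m\geq n$ on the inner sum becomes the upper limit $n\leq m$ after the swap, the term $n=0$ contributing correctly to the $m=0$ coefficient through the empty composition. As a consistency check, setting $\lambda=1$ collapses the right-hand side to $D_m^{(k)}(x)$, in agreement with the already-noted relation $D_{m,1}^{(k)}(x)=D_m^{(k)}(x)$.
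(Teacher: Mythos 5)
Your proposal is correct and is essentially the paper's own argument: the authors likewise obtain the identity by replacing $(1+t)$ with $(1+t)^{\lambda}$ in (\ref{14}) (equivalently, your substitution $u=(1+t)^{\lambda}-1$), then expand $\bigl((1+t)^{\lambda}-1\bigr)^{n}$ by the Cauchy product rule and equate coefficients of $t^{m}$. The only cosmetic difference is that the paper writes the binomial expansion as the finite sum $\sum_{i=1}^{\lambda}\binom{\lambda}{i}t^{i}$, treating $\lambda$ as a positive integer, whereas you use the general binomial series.
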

\begin{proof}
From (\ref{14}), replacing $(1+t)$ by $(1+t)^\lambda$, we have
\begin{equation*}
\left(\frac{\lambda \log⁡(1+t)}{(1+t)^\lambda-1}\right)^k (1+t)^{\lambda x}= \sum_{n=0}^\infty D_n^{(k)} (x)  \frac{((1+t)^\lambda-1 )^n}{n!},
\end{equation*}
thus from (\ref{37}), we get
\begin{eqnarray*}
\sum_{m=0}^\infty D_{m,\lambda}^{(k)} (\lambda x) \frac{ t^m}{m!} &= & \sum_{n=0}^\infty \frac{D_n^{(k)} (x)}{n!}  \left(
\sum_{i=0}^\lambda \left(^{\lambda}_i \right) t^i -1 \right)^n
\\
&= &
\sum_{n=0}^\infty \frac{D_n^{(k)} (x)}{n!}  \left(
\sum_{i=1}^\lambda \left(^{\lambda}_i \right) t^i \right)^n.
\end{eqnarray*}

\noindent
Using Cauchy rule of product of series, we obtain
\begin{eqnarray*}
\sum_{m=0}^\infty D_{m,\lambda}^{(k)} (\lambda x)   \frac{t^m}{m!} & = &
\sum_{n=0}^\infty \frac{D_n^{(k)} (x))}{n!} \sum_{m=n}^\infty \sum_{i_1+i_2+ \cdots +i_n=m} \left(^{\lambda}_{i_1} \right) \cdots \left(^{\lambda}_{i_n} \right) t^m
\\
&= &
\sum_{m=0}^\infty m! \sum_{n=0}^m  \sum_{i_1+i_2+ \cdots +i_n=m}
\frac{D_n^{(k)} (x))}{n!}  \left(^{\lambda}_{i_1} \right) \cdots \left(^{\lambda}_{i_n} \right) \frac{t^m}{m!}.
\end{eqnarray*}

\noindent
Equating the coefficients of $t^m$ on both sides yields (\ref{42}). This completes the proof.
\end{proof}

\noindent
Setting  $x=0$, in (\ref{42}), we have the following corollary as a special case.
\begin{corollary}
For $m \geq 0$, we have
\begin{equation} \label{43}
D_{m,\lambda}^{(k)} = m! \sum_{n=0}^m \sum_{i_1+i_2+ \cdots+i_n=m} \frac{D_n^{(k) }}{n!} \left(^{\lambda}_{i_1} \right)\left(^{\lambda}_{i_2} \right) \cdots \left(^{\lambda}_{i_n} \right).
\end{equation}
\end{corollary}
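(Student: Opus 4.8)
The plan is to derive the identity by composing the generating function for the ordinary order-$k$ Daehee polynomials with a suitable power series. Writing (\ref{14}) with dummy variable $s$, namely $\left(\frac{\log(1+s)}{s}\right)^k(1+s)^x=\sum_{n\ge 0}D_n^{(k)}(x)\frac{s^n}{n!}$, I would substitute $s=(1+t)^\lambda-1$. Since $(1+t)^\lambda-1=\lambda t+O(t^2)$ has zero constant term, this is a legitimate composition of formal power series. Under it $1+s=(1+t)^\lambda$, hence $\log(1+s)=\lambda\log(1+t)$ and $(1+s)^x=(1+t)^{\lambda x}$, so the left-hand side becomes precisely the generating function in (\ref{37}) with $x$ replaced by $\lambda x$. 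This yields the intermediate identity
\[
\sum_{m=0}^\infty D_{m,\lambda}^{(k)}(\lambda x)\frac{t^m}{m!}=\sum_{n=0}^\infty\frac{D_n^{(k)}(x)}{n!}\big((1+t)^\lambda-1\big)^n .
\]

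Next I would expand the right-hand side in powers of $t$. From the binomial theorem $(1+t)^\lambda-1=\sum_{i\ge 1}\binom{\lambda}{i}t^i$, so applying the Cauchy rule for the product of series to the $n$-th power gives
\[
\big((1+t)^\lambda-1\big)^n=\sum_{m\ge n}\ \sum_{i_1+i_2+\cdots+i_n=m}\binom{\lambda}{i_1}\binom{\lambda}{i_2}\cdots\binom{\lambda}{i_n}\,t^m ,
\]
where each $i_j\ge 1$, which is why the lowest surviving power is $t^n$. Substituting this back, interchanging the order of summation so that for fixed $m$ the index $n$ runs from $0$ to $m$, and equating coefficients of $t^m$ on both sides then delivers (\ref{42}).

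I expect the only real difficulty to be the combinatorial bookkeeping, not any conceptual step: one must keep the constraint $i_j\ge 1$ in the composition sum, justify the reindexing $\sum_{n\ge 0}\sum_{m\ge n}=\sum_{m\ge 0}\sum_{n=0}^m$, and verify the boundary term $n=0$ (the empty product is $1$ and contributes only to $m=0$, consistent with $D_{0,\lambda}^{(k)}(\lambda x)=D_0^{(k)}(x)=1$). Because the argument is entirely at the level of formal power series, the vanishing constant term of $(1+t)^\lambda-1$ guarantees that each coefficient of $t^m$ is a finite sum and no convergence question arises. Finally, setting $x=0$ and using $D_{m,\lambda}^{(k)}(0)=D_{m,\lambda}^{(k)}$ and $D_n^{(k)}(0)=D_n^{(k)}$ immediately yields the companion Corollary (\ref{43}).
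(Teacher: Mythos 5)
Your proposal is correct and follows essentially the same route as the paper: the substitution $s=(1+t)^\lambda-1$ in the generating function (\ref{14}) is exactly the paper's ``replace $(1+t)$ by $(1+t)^\lambda$'' step, followed by the same Cauchy-product expansion of $\bigl((1+t)^\lambda-1\bigr)^n$, reindexing, coefficient comparison to get (\ref{42}), and specialization $x=0$. Your remarks on the constraint $i_j\geq 1$ and the formal-power-series justification are a slightly more careful rendering of the same argument.
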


\noindent
Kim et al. \cite{Kimetal2013}, defined the $\lambda$-Daehee polynomials of the second kind with order $k$ as follows.
\begin{equation} \label{44}
\left( \frac{\lambda \log⁡(1+t)}{(1+t)^\lambda-1} \right)^k  (1+t)^{\lambda k+x}) = \sum_{n=0}^\infty \hat{D}_{n,\lambda}^{(k)} (x)\frac{ t^n}{n!}.
\end{equation}

\noindent
Kim et al. \cite[Theorem 5]{Kimetal2013} proved that
\begin{equation} \label{45}
\hat{D}_{m, \lambda}^{(k)} (x) = \sum_{l=0}^m s_1 (m,l)  \lambda^l B_l^{(k)} \left(k+\frac{x}{\lambda} \right),
\end{equation}
and
\begin{equation} \label{46}
\lambda^m B_m^{(k) } \left(k+\frac{x}{\lambda}\right)= \sum_{n=0}^m s_2 (m,n) \hat{D}_{n,\lambda}^{(k)} (x).
\end{equation}

\noindent
Also, Kim et al. \cite[Eq. (35)]{Kimetal2013} introduced the following result
\begin{equation} \label{47}
B_n^{(k) } (k-x)=(-1)^n B_n^{(k)}(x).
\end{equation}

\noindent
\begin{remark}
We can write (\ref{45}) and (\ref{46}), respectively, in the following matrix forms
\begin{equation} \label{48}
\hat{\bf D}_{\lambda}^{(k)} (x)={\bf S}_1 \,  {\bf \Lambda}_1 \,  {\bf B}^{(k)} \left(-\frac{x}{\lambda}\right),
\end{equation}
and
\begin{equation} \label{49}
{\bf \Lambda}_1 \,  {\bf B}^{(k)} \left(-\frac{x}{\lambda}\right)=  {\bf S}_2 \,  \hat{\bf D}_{\lambda}^{(k)} (x),
\end{equation}
\end{remark}

\noindent
where $\hat{\bf D}_\lambda (x)$   is $(n+1)\times(n+1)$ matrix for the $\lambda$-Daehee polynomials of the second kind of order $k$  and ${\bf \Lambda}_1$ is $(n+1)\times(n+1)$ diagonal matrix with elements $({\bf \Lambda}_1 )_{ii}=(-\lambda)^i$, for  $i=j=0,1,\cdots ,n$.

\noindent
For example, if  setting $0\leq n \leq 3, \;0 \leq k \leq n$, in (\ref{48}), we have
{\footnotesize
\begin{eqnarray*}
& &
\left(
\begin{array}{cccc}
1 & 0 & 0 & 0 \\
0 & 1 & 0 & 0 \\
0 & -1 & 1 & 0 \\
0 & 2 & -3 & 1
\end{array}
\right)\left(
\begin{array}{cccc}
1 & 0 & 0 & 0 \\
0 & -\lambda & 0 & 0 \\
0 & 0 & \lambda^2 & 0 \\
0 & 0 & 0 & -\lambda^3
\end{array}
\right)
\times
\\
& &
\left(
\begin{array}{cccc}
1 & 1 & 1 & 1 \\
-\frac{x}{\lambda} & -\frac{x}{\lambda}-\frac{1}{2} &- \frac{x}{\lambda}-1 &-\frac{x}{\lambda}-\frac{3}{2} \\
\frac{x^2}{\lambda^2} & \frac{1}{\lambda^2}(\frac{\lambda^2}{6}+\lambda x+x^2 )& \frac{1}{\lambda^2}( \frac{5}{6} \lambda^2+2\lambda x+x^2 ) & \frac{1}{\lambda^2}(2\lambda + x)(\lambda + x) \\
-\frac{x^3}{\lambda^3} & \frac{x(\lambda + 2x)( \lambda + x)}{2\lambda^3 }  &-\frac{(\lambda + x)(\lambda^2+4\lambda x+2x^2 )}{2\lambda^3} &-\frac{(3\lambda +2x)(3\lambda^2+6\lambda x+2x^2 )}{4\lambda^3}
\end{array}
\right)=
\\
& &
\left(
\begin{array}{cccc}
1 & 1 & 1 & 1\\
x & \frac{\lambda}{2}+x & \lambda+x &\frac{3\lambda}{2} + x \\
x(x-1) & \frac{\lambda^2 }{6}+\lambda x-\frac{\lambda}{2}+x^2-x & \frac{5\lambda^2}{6} +2\lambda x-\lambda+x^2-x &2\lambda^2+3\lambda x-\frac{3\lambda}{2} +x^2-x \\
\hat{D}_{3,\lambda}^{(0)}&\hat{D}_{3,\lambda}^{(1)}(x)& \hat{D}_{3,\lambda}^{(2)}(x) & \hat{D}_{3,\lambda}^{(3)}(x)
\end{array}
\right).
\end{eqnarray*}
}

where
\begin{eqnarray*}
\hat{D}_{3,\lambda}^{(0)}(x) & =& x(x-1)(x-2), \;  \hat{D}_{3,\lambda}^{(1)}(x)=-\frac{1}{2}(\lambda+2x-2)(\lambda+2x-x^2-\lambda x)
\\
\hat{D}_{3,\lambda}^{(2)}(x)& = & \frac{1}{2} (\lambda + x-1)(\lambda^2 + 4\lambda x-4\lambda + 2x^2-4x),
\\
\hat{D}_{3,\lambda}^{(3)}(x)&=& \frac{1}{4} (3\lambda+2x-2)(3\lambda^2+6\lambda x-6\lambda+2x^2-4x))).
\end{eqnarray*}

\begin{remark}
We can prove Eq. (\ref{46}) easily by using the matrix form, multiplying Eq.(\ref{48}) by ${\bf S}_2$ as follows.
\begin{equation*}
{\bf S}_2 \,  \hat{\bf D}_\lambda^{(k)} (x)={ \bf S}_2 \,  {\bf S}_1 \,  { \bf \Lambda}_1 \,  {\bf B}^{(k)} \left(-\frac{x}{\lambda} \right)={\bf I} \, {\bf \Lambda}_1 \,  {\bf B}^{(k)} \left(- \frac{x}{\lambda} \right)={\bf \Lambda}_1 \,  {\bf B}^{(k)} \left(- \frac{x}{\lambda} \right).
\end{equation*}

\end{remark}

\section{The Twisted $\lambda$-Daehee Numbers and Polynomials of Higher Order}
Kim et al. \cite{Kimetal2013b} defined the twisted $\lambda$-Daehee polynomials of the first kind of order $k$ by the generating function
\begin{equation} \label{50}
\left(\frac{\lambda  \log⁡(1+\xi t)}{(1+\xi t)^\lambda-1} \right)^k (1+\xi t)^x = \sum_{n=0}^\infty D_{n,\xi }^{(k)} (x|\lambda ) \frac{ t^n}{n!}.
\end{equation}

\noindent
In the special case, $x=0,\; D_{n,\xi, \lambda}^{(k) } = D_{n,\xi }^{(k)} (0|\lambda)$ are called the twisted $\lambda$-Daehee numbers of the first kind of order $k$.
\begin{equation} \label{51}
\left( \frac{ \lambda \log⁡(1+\xi t)}{(1+\xi t)^\lambda-1} \right)^k = \sum_{n=0}^\infty D_{n,\xi,\lambda}^{(k)} \frac{ t^n}{n!}.
\end{equation}

\noindent
The twisted Bernoulli polynomials of order $r\in \mathbb{N}$  are defined by the generating function, see \cite{Dolgyetal2013}
\begin{equation} \label{52}
\left(\frac{t}{\xi e^t-1} \right)^r e^{xt}=\sum_{n=0}^\infty B_{n,\xi}^{(r)} (x)  \frac{t^n}{n!}.
\end{equation}

\noindent
The relation between the twisted $\lambda$-Daehee polynomials and $\lambda$-Daehee polynomials of order $k$, can be obtained by the following corollary.
\begin{corollary}
For $n \geq 0, \, k\in \mathbb{N}$, we have
\begin{equation} \label{a1}
D_{n,\xi}^{(k) } (x|\lambda)=\xi^n  D_{n,\lambda}^{(k) } (x).
\end{equation}
\end{corollary}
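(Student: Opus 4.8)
The plan is to derive (\ref{a1}) by a single change of variable in the generating function defining the $\lambda$-Daehee polynomials. I would begin with the defining relation (\ref{37}) for $D_{n,\lambda}^{(k)}(x)$ and substitute $t\mapsto\xi t$ throughout, which is a legitimate operation on formal power series. On the left-hand side this sends $\left(\frac{\lambda\log(1+t)}{(1+t)^\lambda-1}\right)^k(1+t)^x$ to $\left(\frac{\lambda\log(1+\xi t)}{(1+\xi t)^\lambda-1}\right)^k(1+\xi t)^x$, which is exactly the left-hand side of (\ref{50}), the defining generating function of the twisted $\lambda$-Daehee polynomials $D_{n,\xi}^{(k)}(x|\lambda)$.

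The next step is to record what the substitution does to the right-hand side of (\ref{37}): the series $\sum_{n=0}^\infty D_{n,\lambda}^{(k)}(x)\frac{t^n}{n!}$ becomes $\sum_{n=0}^\infty D_{n,\lambda}^{(k)}(x)\frac{(\xi t)^n}{n!}=\sum_{n=0}^\infty \xi^n D_{n,\lambda}^{(k)}(x)\frac{t^n}{n!}$. Combining this with the identification of the transformed left-hand side through (\ref{50}) gives $\sum_{n=0}^\infty D_{n,\xi}^{(k)}(x|\lambda)\frac{t^n}{n!}=\sum_{n=0}^\infty \xi^n D_{n,\lambda}^{(k)}(x)\frac{t^n}{n!}$. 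Equating the coefficients of $t^n/n!$ on both sides then yields $D_{n,\xi}^{(k)}(x|\lambda)=\xi^n D_{n,\lambda}^{(k)}(x)$, which is the asserted identity (\ref{a1}).

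I do not expect any genuine obstacle in this argument; the entire content is the observation that the twisted generating function (\ref{50}) is obtained from the untwisted one (\ref{37}) by the dilation $t\mapsto\xi t$. The only points requiring care are the elementary bookkeeping of the factor $\xi^n$ produced by $(\xi t)^n$ and the implicit use of the fact that two exponential generating functions that agree as formal power series have equal coefficients. Finally, setting $x=0$ in the result would additionally recover the corresponding statement $D_{n,\xi,\lambda}^{(k)}=\xi^n D_{n,\lambda}^{(k)}$ for the twisted $\lambda$-Daehee numbers of (\ref{51}).
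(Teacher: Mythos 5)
Your proposal is correct and follows exactly the paper's own argument: substituting $t\mapsto\xi t$ in the generating function (\ref{37}), identifying the resulting left-hand side with that of (\ref{50}), and comparing coefficients of $t^n/n!$. No differences worth noting.
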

\begin{proof}
Replacing  $t$  with $\xi t$ in (\ref{37}), we have
\begin{equation} \label{a2}
\left( \frac{\lambda \log⁡(1+\xi t)}{(1+\xi t)^\lambda-1} \right)^k  (1+\xi t)^x= \sum_{n=0}^\infty D_{n,\lambda}^{(k)} (x)
\frac{(\xi t)^n}{n!} = \sum_{n=0}^\infty \xi^n  D_{n,\lambda}^{(k) } (x)  \frac{t^n}{n!},
\end{equation}

\noindent
hence by virtue of (\ref{50}) and (\ref{a2}), we get (\ref{a1}). This completes the proof.
\end{proof}

\noindent
If we put in (\ref{a1}), $x=0$ and $\lambda=1$, respectively,  we have
\begin{equation*}
D_{n,\xi,\lambda}^{(k)}=\xi^n  D_{n,\lambda}^{(k)}.
\end{equation*}
and
\begin{equation*}
D_{n,\xi}^{(k)} (x)=\xi^n  D_n^{(k)}(x).
\end{equation*}

\noindent
Kim et al. \cite[Theorem 1]{Kimetal2013b} proved the following relation. For $m\in \mathbb{Z}, \; k\in \mathbb{N}$,
\begin{equation} \label{53}
D_{m,ξ}^{(k)} (x│ \lambda )=\xi^m \sum_{l=0}^m S_1 (m,l) \lambda^l B_l^{(k) } \left(\frac{x}{\lambda} \right),
\end{equation}
and
\begin{equation} \label{54}
\lambda^m B_{m,\xi^\lambda}^{(k)} \left(\frac{x}{\lambda} \right)=\sum_{n=0}^m D_{n,\xi}^{(k)} (x│ \lambda ) \xi^{-n-x} s_2 (m,n).
\end{equation}

\noindent
where $B_{m,\xi^\lambda }^{(k) } \left(\frac{x}{\lambda}\right)$ is defined by Kim et al. \cite[Eq. 15]{Kimetal2013b}, as follows
\begin{equation} \label{55}
\left(\frac{\lambda t}{\xi^\lambda e^{\lambda t}-1}\right)^k (\xi e^t )^x = \xi^x \sum_{m=0}^\infty \lambda^m B_{m,\xi^\lambda}^{(k)} \left(\frac{x}{\lambda}\right)  \frac{ t^m}{m!}.
\end{equation}

\begin{remark}
We can write (\ref{53}) in the following matrix form
\begin{equation} \label{56}
{\bf D}_\xi^{(k)} (x|\lambda)={\bf \Xi S}_1 \,  {\bf \Lambda B}^{(k)} \left(\frac{x}{\lambda}\right),
\end{equation}

\end{remark}

\noindent
where ${\bf D}_\xi^{(k)} (x|\lambda)$ is  $(n+1)\times(k+1)$ matrix for the twisted Daehee numbers of the first kind of the order $k$ and ${\bf \Xi}$ is $(n+1)\times(n+1)$ diagonal matrix with elements $({\bf \Xi})_{ii}=\xi^i$  for $i=j=0,1,\cdots,n$.

\noindent
For example, if setting $0\leq n \leq 3,\; 0\leq k \leq n$, in (\ref{56}), we have
{\footnotesize
\begin{eqnarray*}
 \left(
\begin{array}{cccc}
1 & 0 & 0 & 0\\
0 & \xi & 0 & 0\\
0 & 0 & \xi^2 & 0\\
0 & 0 & 0 & \xi^3
\end{array}
\right)
\left(
\begin{array}{cccc}
1 & 0 & 0 & 0 \\
0 & 1 & 0 & 0 \\
0 & -1 & 1 & 0 \\
0 & 2 & -3 &1
\end{array}
\right)
\left(
\begin{array}{cccc}
1 & 0 & 0 & 0 \\
0 & \lambda & 0 & 0 \\
0 & 0 & \lambda^2 & 0 \\
0 & 0 & 0 & \lambda^3
\end{array}
\right)\times
\; \; \; \; \; \; \; \; \; \; \; \; \; \; \; \; \; \; \; \; \; \; \; \; \; \; \; \; \; \; \; \;
\; \; \; \; \; \; \; \; \; \; \; \; \; \; \; \; \; \; \; \; \; \; \; \; \; \; \; \; \; \; \; \;
\\
\left(
\begin{array}{cccc}
1 & 1 & 1 & 1\\
\frac{x}{\lambda} & \frac{x}{\lambda}-\frac{1}{2} & \frac{x}{\lambda}-1 & \frac{x}{\lambda}-\frac{3}{2} \\
\frac{x^2}{\lambda^2} & \frac{x^2}{\lambda^2} -\frac{x}{\lambda}+\frac{1}{6} & \frac{x^2}{\lambda^2} -\frac{2x}{\lambda}+ \frac{5}{6} & \frac{x^2}{\lambda^2} -\frac{3x}{\lambda}+2 \\
\frac{x^3}{\lambda^3} & \frac{x}{2\lambda}-\frac{3x^2}{2\lambda^2}+ \frac{x^3}{\lambda^3} & \frac{5x}{2\lambda}-\frac{3x^2}{\lambda^2} + \frac{x^3}{\lambda^3} -\frac{1}{2} & \frac{6x}{\lambda}-\frac{9x^2}{2\lambda^2}+ \frac{x^3}{\lambda^3} -\frac{9}{4}
\end{array}
\right)=
\; \; \; \; \; \; \; \; \; \; \; \; \; \; \; \; \; \; \; \; \; \; \; \; \; \; \; \; \; \; \; \;
\; \; \; \; \; \; \; \; \; \; \; \; \; \; \; \; \; \; \; \; \; \; \; \; \;
\\
\left(
\begin{array}{cccc}
1 & 1 & 1 & 1 \\
\xi x & -\frac{\xi}{2} (\lambda-2x)&-\xi(\lambda-x)&-\frac{\xi}{2}(3\xi-2x) \\
\xi^2 x(x-1) & \xi^2 (\frac{\lambda^2}{6}-\lambda x+ \frac{\lambda}{2}+x^2-x)& \xi^2 (\frac{5}{6} \lambda^2-2\lambda x+\lambda+x^2-x)& \xi^2 (2\lambda^2-3\lambda x+\frac{3}{2} \lambda+x^2-x)\\
D_{3,\xi}^{(0)}(x|\lambda) & D_{3,\xi}^{(1)}(x|\lambda) & D_{3,\xi}^{(2)}(x|\lambda) & D_{3,\xi}^{(3)}(x|\lambda)
\end{array}
\right).
\end{eqnarray*}
}

\noindent
where
\begin{eqnarray*}
D_{3,\xi}^{(0)}(x|\lambda) &= &\xi^3 x(x-1)(x-2)\; D_{3,\xi}^{(1)}(x|\lambda)= -\frac{\xi^3}{2} (\lambda-2x+2)(\lambda-2x+x^2-\lambda x),
\\
D_{3,\xi}^{(2)}(x|\lambda) &=& -\frac{\xi^3}{2}(\lambda-x+1)(\lambda^2-4\lambda x+4\lambda+2x^2-4x),
 \\
D_{3,\xi}^{(0)}(x|\lambda) &=& -\frac{\xi^3}{4}(3\lambda-2x+2)(3\lambda^2-6\lambda x+6\lambda+2x^2-4x).
\end{eqnarray*}

\begin{remark}
In fact, it seems that the statement in (\ref{54}) is not correct, the second equation of, Kim et al. \cite[Theorem 1]{Kimetal2013b}.
From (\ref{56}), multiplying both sides by ${\bf \Xi}^{-1}$, we have,
\begin{equation*}
{\bf \Xi}^{-1} \, {\bf D}_\xi^{(k)} (x|\lambda)={\bf \Xi}^{-1} \,  {\bf \Xi S}_1 \,  {\bf \Lambda B}^{(k)} \left(\frac{x}{\lambda} \right)=
{\bf S}_1 \, {\bf \Lambda B}^{(k)} \left( \frac{x}{\lambda} \right),
\end{equation*}

\noindent
then multiplying both sides by ${\bf S}_2$, we have
\begin{equation} \label{58}
{\bf S}_2 \,  {\bf \Xi}^{-1} \, {\bf D}_\xi^{(k)} (x|\lambda)= {\bf S}_2 \,  {\bf S}_1 \, {\bf \Lambda B}^{(k)} \left(\frac{x}{\lambda} \right)={\bf \Lambda B}^{(k)} \left(\frac{x}{\lambda}\right).
\end{equation}

\noindent
From (\ref{54}) and (\ref{58}), it is clear that there is a contradiction.
\end{remark}

\noindent
In the following theorem we obtained the corrected relation as follows.
\begin{theorem}
For $m\in \mathbb{Z},\; k\in \mathbb{N}$, we have
\begin{equation} \label{59}
\lambda^m  B_m^{(k)}  \left(\frac{x}{\lambda} \right)= \sum_{n=0}^m D_{n,\xi}^{(k)} (x| \lambda) \xi^{-n} s_2 (m,n).
\end{equation}
\end{theorem}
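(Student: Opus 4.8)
The plan is to prove \eqref{59} by reducing its right-hand side to the already-established relation \eqref{39}, with the $\xi$-dependence eliminated by a single substitution. The crucial ingredient is Corollary~\eqref{a1}, which asserts $D_{n,\xi}^{(k)}(x|\lambda)=\xi^n D_{n,\lambda}^{(k)}(x)$; multiplying through by $\xi^{-n}$ gives the exact cancellation $\xi^{-n}D_{n,\xi}^{(k)}(x|\lambda)=D_{n,\lambda}^{(k)}(x)$. This is precisely the correction missing from the erroneous formula \eqref{54}, whose spurious $\xi^{-n-x}$ weight is what produced the contradiction noted above.

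First I would start from the summation on the right of \eqref{59} and insert \eqref{a1} term by term, so that
\begin{equation*}
\sum_{n=0}^m D_{n,\xi}^{(k)}(x|\lambda)\,\xi^{-n}\,s_2(m,n)
=\sum_{n=0}^m \xi^{-n}\,\xi^{n}\,D_{n,\lambda}^{(k)}(x)\,s_2(m,n)
=\sum_{n=0}^m D_{n,\lambda}^{(k)}(x)\,s_2(m,n),
\end{equation*}
an expression that no longer involves the twisting parameter $\xi$. Next I would invoke \eqref{39} (Kim et al. \cite[Theorem~3]{Kimetal2013}), which with the index relabelling $n\mapsto m$, $m\mapsto n$ reads $\lambda^m B_m^{(k)}(x/\lambda)=\sum_{n=0}^m s_2(m,n)\,D_{n,\lambda}^{(k)}(x)$. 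Comparing this with the reduced sum identifies it immediately with the left-hand side of \eqref{59}, completing the argument.

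It is worth recording that the same conclusion can be read off in matrix form, mirroring \eqref{58}: writing the twisting as the diagonal matrix ${\bf\Xi}$ with entries $\xi^i$, Corollary~\eqref{a1} is the statement ${\bf D}_\xi^{(k)}(x|\lambda)={\bf\Xi}\,{\bf D}_\lambda^{(k)}(x)$, hence ${\bf\Xi}^{-1}{\bf D}_\xi^{(k)}(x|\lambda)={\bf D}_\lambda^{(k)}(x)$, and then \eqref{41} yields ${\bf\Lambda}\,{\bf B}^{(k)}(x/\lambda)={\bf S}_2\,{\bf D}_\lambda^{(k)}(x)={\bf S}_2\,{\bf\Xi}^{-1}{\bf D}_\xi^{(k)}(x|\lambda)$, whose entrywise form is exactly \eqref{59}.

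I do not expect a genuine obstacle here, since the result is a direct consequence of \eqref{a1} and \eqref{39}. The only point requiring care is the bookkeeping of the power of $\xi$: one must verify that the weight $\xi^{-n}$ in \eqref{59} cancels the factor $\xi^{n}$ coming from \eqref{a1} to leave $\xi^{0}=1$, with no residual $x$-dependent power of $\xi$. This clean cancellation is precisely what separates the correct identity \eqref{59} from the flawed \eqref{54}, and confirming it is the substantive content of the proof.
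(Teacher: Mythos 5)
Your proposal is correct, but it takes a genuinely different route from the paper. The paper proves \eqref{59} directly from the generating function: it substitutes $t\mapsto(e^t-1)/\xi$ into \eqref{50}, which collapses the left-hand side to $\bigl(\lambda t/(e^{\lambda t}-1)\bigr)^k e^{xt}$, then expands $(e^t-1)^n/n!$ via the Stirling generating function \eqref{6} and compares coefficients of $t^m/m!$ against the Bernoulli generating function \eqref{1}. You instead reduce the claim to two previously recorded facts: the twisting relation \eqref{a1}, $D_{n,\xi}^{(k)}(x|\lambda)=\xi^n D_{n,\lambda}^{(k)}(x)$, and the untwisted identity \eqref{39} of Kim et al., so that the weight $\xi^{-n}$ in the sum exactly cancels the factor $\xi^{n}$ and the statement becomes \eqref{39} verbatim. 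Your argument is shorter, and it isolates precisely why the exponent in the flawed formula \eqref{54} must be $-n$ rather than $-n-x$; its only cost is that it leans on \eqref{39} as an external input (though the paper does justify \eqref{39} independently via the matrix identity following \eqref{41}, so nothing circular occurs). The paper's generating-function computation is self-contained and re-derives the result from scratch, which is why the authors present it that way; both proofs are valid and your matrix-form remark matches the paper's own Eq. \eqref{63}.
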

\begin{proof}
From Eq. (\ref{50}), replacing  $t$ by $(e^t-1)/ \xi $ ,  we have
\begin{eqnarray} \label{60}
\left( \frac{ \lambda \log⁡\left( 1 + \frac{\xi(e^t-1)}{\xi}\right)}{\left(1+ \frac{\xi(e^t-1)}{\xi} \right)^\lambda-1}\right)^k
\left( 1 + \frac{\xi(e^t-1)}{\xi} \right)^x & = & \sum_{n=0}^\infty D_{n,\xi}^{(k)} (x| \lambda) \frac{ (e^t-1)^n}{n! \xi^n}
\nonumber\\
\left( \frac{\lambda t}{e^{\lambda t}-1} \right)^k e^{tx} & = & \sum_{n=0}^\infty D_{n,\xi}^{(k)} (x|\lambda) \frac{(e^t-1)^n}{n!\xi^n}.
\end{eqnarray}

\noindent
Substituting from (\ref{6}) into (\ref{60}), we have
\begin{eqnarray} \label{61}
\left( \frac{\lambda t}{e^{\lambda t}-1} \right)^k e^{\lambda t(\frac{x}{\lambda})} & = & \sum_{n=0}^\infty D_{n,\xi }^{(k)} (x|\lambda) \xi^{-n} \sum_{m=n}^\infty s_2 (m,n)\frac{t^m}{m!}
\nonumber\\
& = &\sum_{m=0}^ \infty \sum_{n=0}^m D_{n,\xi}^{(k) } (x|\lambda) \xi^{-n}  s_2 (m,n)\frac{t^m}{m!}.
\end{eqnarray}

\noindent
From (\ref{1}) and (\ref{61}), we have
\begin{equation} \label{62}
\sum_{m=0}^\infty \lambda^m B_m^{(k)} \left(\frac{x}{\lambda} \right) \frac{ t^m}{m!} = \sum_{m=0}^\infty \sum_{n=0}^m D_{n,\xi }^{(k)} (x|\lambda) \xi^{-n}  s_2 (m,n) \frac{t^m}{m!}.
\end{equation}

\noindent
Equating the coefficients of $t^m$ on both sides gives (\ref{59}). This completes the proof.
\end{proof}

\noindent
Moreover, we can represent Equation (\ref{59}), in the following matrix form as (\ref{58}).
\begin{equation} \label{63}
{\bf B}^{(k)} \left(\frac{x}{\lambda} \right)= {\bf \Lambda}^{-1} \, {\bf S}_2 \, {\bf \Xi }^{-1} \, {\bf D}_\xi^{(k)} (x|\lambda).                                                                      \end{equation}

\noindent
For example, if setting  $0 \leq n \leq 3,\; 0 \leq k \leq n$, in (\ref{63}), we have
{\footnotesize
\begin{eqnarray*}
\left(
\begin{array}{cccc}
1 & 0 & 0 & 0 \\
0 & \frac{1}{\lambda} & 0 & 0 \\
0 & 0 & \frac{1}{\lambda^2} & 0 \\
0 & 0 & 0 & \frac{1}{\lambda^3}
\end{array}
\right)
\left(
\begin{array}{cccc}
1 & 0 & 0 & 0 \\
0 & 1 & 0 & 0 \\
0 & 1 & 1 & 0 \\
0 & 1 & 3 & 1
\end{array}
\right)
\left(
\begin{array}{cccc}
1 & 0 & 0 & 0 \\
0 & \frac{1}{\xi} & 0 & 0 \\
0 & 0 & \frac{1}{\xi^2} & 0 \\
0 & 0 & 0 & \frac{1}{\xi^3}
\end{array}
\right)\times
\; \; \; \; \; \; \; \; \; \; \; \; \; \; \; \; \; \; \; \; \; \; \; \; \; \; \; \; \; \; \; \;
\; \; \; \; \; \; \; \; \; \; \; \; \; \; \; \; \; \; \; \; \; \; \; \; \; \; \; \; \; \; \; \;
\; \; \; \;
\\
\left(
\begin{array}{cccc}
1 & 1 & 1 & 1 \\
\xi x & -\frac{\xi}{2} (\lambda-2x)&-\xi(\lambda-x)&-\frac{\xi}{2}(3\xi-2x)\\
\xi^2 x(x-1) & \xi^2 (\frac{\lambda^2}{6}-\lambda x+ \frac{\lambda}{2}+x^2-x) &\xi^2 (\frac{5}{6} \lambda^2-2\lambda x+\lambda+x^2-x) &\xi^2 (2\lambda^2-3\lambda x+\frac{3}{2} \lambda+x^2-x)\\
D_{3,\xi}^{(0)}(x|\lambda) & D_{3,\xi}^{(1)}(x|\lambda) & D_{3,\xi}^{(2)}(x|\lambda) & D_{3,\xi}^{(3)}(x|\lambda)
\end{array}
\right)
\\
= \left(
\begin{array}{cccc}
1 & 1 & 1 & 1 \\
\frac{x}{\lambda} & \frac{x}{\lambda}-\frac{1}{2} & \frac{x}{\lambda}-1 & \frac{x}{\lambda}-\frac{3}{2} \\
\frac{x^2}{\lambda^2} & \frac{x^2}{\lambda^2} -\frac{x}{\lambda}+ \frac{1}{6} & \frac{x^2}{\lambda^2} -\frac{2x}{\lambda}+\frac{5}{6} & \frac{x^2}{\lambda^2} -\frac{3x}{\lambda}+2 \\
\frac{x^3}{\lambda^3} & \frac{x}{2\lambda}-\frac{3x^2}{2\lambda^2}+ \frac{x^3}{\lambda^3} & \frac{5x}{2\lambda}-\frac{3x^2}{\lambda^2} + \frac{x^3}{\lambda^3} - \frac{1}{2} & \frac{6x}{\lambda}-\frac{9x^2}{2\lambda^2}+ \frac{x^3}{\lambda^3} -\frac{9}{4}
\end{array}
\right)
\; \; \; \; \; \; \; \; \; \; \; \; \; \; \; \; \; \; \; \; \; \; \; \; \; \; \; \; \; \; \; \;
\end{eqnarray*}
}

\noindent
where
\begin{eqnarray*}
D_{3,\xi}^{(0)}(x|\lambda)& =& \xi^3 x(x-1)(x-2), \; D_{3,\xi}^{(1)}(x|\lambda)=-\frac{\xi^3}{2} (\lambda-2x+2)(\lambda-2x+x^2-\lambda x)
\\
D_{3,\xi}^{(2)}(x|\lambda)&=&-\frac{\xi^3}{2}(\lambda-x+1)(\lambda^2-4\lambda x+4\lambda+2x^2-4x),
\\
D_{3,\xi}^{(3)}(x|\lambda)&=&-\frac{\xi^3}{4}(3\lambda-2x+2)(3\lambda^2-6\lambda x+6\lambda+2x^2-4x).
\end{eqnarray*}

\noindent
Kim et al. \cite{Kimetal2013b} introduced the twisted $\lambda$-Daehee polynomials of the second kind of order $k$ as follows:
\begin{equation} \label{64}
\left( \frac{\lambda  \log⁡(1+\xi t) (1+ \xi t)^\lambda}{(1+\xi t)^\lambda-1} \right)^k (1+ \xi t)^x = \sum_{n=0}^\infty \hat{D}_{n,\xi }^{(k) } (x|\lambda) \frac{ t^n}{n!}.
\end{equation}

\noindent
Setting $x=0, \; \hat{D}_{n,\xi,\lambda}^{(k) }= \hat{D}_{n,\xi }^{(k)} (0|\lambda)$, we have the twisted Daehee numbers of second kind of order $k$.
\begin{equation} \label{65}
\left( \frac{\lambda  \log⁡(1+\xi t) (1+ \xi t)^\lambda}{(1+\xi t)^\lambda-1} \right)^k = \sum_{n=0}^\infty \hat{D}_{n,\xi , \lambda}^{(k) }  \frac{ t^n}{n!}.
\end{equation}

\noindent
Kim et al. \cite[Theorem 2]{Kimetal2013b}, proved that. For $m\in \mathbb{Z}, \; k\in \mathbb{N}$, we have
\begin{equation} \label{66}
\xi^{-m} \hat{D}_{n,\xi} (x│ \lambda )= \sum_{l=0}^m s_1 (m,l)  \lambda^l B_l^{(k)} \left(k+\frac{x}{\lambda} \right),
\end{equation}
and
\begin{equation} \label{67}
\lambda^m B_{m,\xi^\lambda}^{(k)} \left(k+\frac{x}{\lambda}\right)=\sum_{n=0}^m \hat{D}_{n,\xi}^{(k) } (x│ \lambda) s_2 (m,n) \xi^{-n-\lambda k-x}.
\end{equation}

\noindent
Using Eq. (\ref{47}), we can write (\ref{66}) in the following matrix form.
\begin{equation} \label{68}
\hat{\bf D}_\xi^{(k) } (x|\lambda)= {\bf \Xi \,  S}_1 \, {\bf \Lambda }_1 \,  {\bf B}^{(k)} \left(-\frac{x}{\lambda} \right),
\end{equation}

\noindent
where $\hat{\bf D}_\xi^{(k)} (x| \lambda)$ is  $(n+1)\times(k+1)$ matrix for the twisted Daehee numbers of the second kind of the order $k$.

\noindent
For example, if setting $0\leq n \leq 3,\; 0\le k \leq n$, in (\ref{68}), we have
{\footnotesize
\begin{eqnarray*}
\left(
\begin{array}{cccc}
1 & 0 & 0 & 0 \\
0 & \xi & 0 & 0 \\
0 & 0 & \xi^2 & 0 \\
0 & 0 & 0 & \xi^3
\end{array}
\right)
\left(
\begin{array}{cccc}
1 & 0 & 0 & 0 \\
0 & 1 & 0 & 0 \\
0 & -1 & 1 & 0 \\
0 & 2 & -3 & 1
\end{array}
\right)
\left(
\begin{array}{cccc}
1 & 0 & 0 & 0 \\
0 & -\lambda & 0 & 0 \\
0 & 0 & \lambda^2 & 0 \\
0 & 0 & 0 & -\lambda^3
\end{array}
\right)
\times
\; \; \; \; \; \; \; \; \;\; \; \; \; \; \; \; \; \; \; \; \; \; \; \; \; \; \;\; \; \; \; \;
\; \; \; \; \; \; \; \; \;\; \; \; \; \; \; \; \; \; \; \; \; \; \; \; \; \; \;\;
\\
\left(
\begin{array}{cccc}
1 & 1 & 1 & 1 \\
-\frac{x}{\lambda} & - \frac{x}{\lambda}-\frac{1}{2} & -\frac{x}{\lambda}-1 & -\frac{x}{\lambda}- \frac{3}{2} \\
\frac{x^2}{\lambda^2} & \frac{x^2}{\lambda^2} + \frac{x}{\lambda}+\frac{1}{6} & \frac{x^2}{\lambda^2} + \frac{2x}{\lambda}+ \frac{5}{6} & \frac{x^2}{\lambda^2} + \frac{3x}{\lambda}+2 \\
-\frac{x^3}{\lambda^3} & -\frac{x}{2\lambda}-\frac{3x^2}{2\lambda^2}- \frac{x^3}{\lambda^3} & -\frac{5x}{2\lambda}-\frac{3x^2}{\lambda^2} -\frac{x^3}{\lambda^3} -\frac{1}{2} &-\frac{6x}{\lambda}-\frac{9x^2}{2\lambda^2}-\frac{x^3}{\lambda^3} -\frac{9}{4}
\end{array}
\right)
=
\; \; \; \; \; \; \; \; \;\; \; \; \; \; \; \; \; \; \; \; \; \; \; \; \; \; \;\; \; \; \; \; \; \; \;
\; \; \; \; \; \; \; \; \;\; \; \;
\\
\left(
\begin{array}{cccc}
1 & 1 & 1 & 1 \\
\xi x & \frac{\xi}{2} (\lambda+2x)& \xi(\lambda+x) & \frac{\xi}{2}(3\xi+2x)\\
\xi^2 x(x-1)& \xi^2 (\frac{\lambda^2}{6} + \lambda x-\frac{\lambda}{2} + x^2-x)& \xi^2 (\frac{5}{6} \lambda^2+2\lambda x-\lambda+x^2-x)&\xi^2 (2\lambda^2+3\lambda x-\frac{3}{2} \lambda+x^2-x)\\
\hat{D}_{3,\xi}^{(0)}(x| \lambda) & \hat{D}_{3,\xi}^{(1)}(x| \lambda) & \hat{D}_{3,\xi}^{(2)}(x| \lambda) & \hat{D}_{3,\xi}^{(3)}(x| \lambda)
\end{array}
\right).
\end{eqnarray*}
}
where
\begin{eqnarray*}
\hat{D}_{3,\xi}^{(0)}(x| \lambda) &=  & \xi^3 x(x-1)(x-2), \; \hat{D}_{3,\xi}^{(1)}(x| \lambda)= -\frac{\xi^3}{2} (\lambda+2x-2)(\lambda+2x-x^2-\lambda x),
\\
\hat{D}_{3,\xi}^{(2)}(x| \lambda)& = & \frac{\xi^3}{2}(\lambda+x-1)(\lambda^2+4\lambda x-4\lambda +2x^2-4x),
\\
\hat{D}_{3,\xi}^{(3)}(x| \lambda) & = &\frac{\xi^3}{4}(3\lambda+2x-2)(3\lambda^2+6\lambda x-6\lambda+2x^2-4x).
\end{eqnarray*}

\begin{remark}
In fact, it seems that there is something not correct in (\ref{67}), the second equation of Kim et al. \cite[Theorem 2]{Kimetal2013b}.\\
From (\ref{68}), multiplying both sides by ${\bf \Xi}^{-1}$, we have,
\begin{equation*}
{\bf \Xi}^{-1} \,  \hat{\bf D}_\xi^{(k)} (x|\lambda)= {\bf \Xi}^{-1} \,  {\bf \Xi \,  S}_1 {\bf \Lambda}_1 \,  { \bf B}^{(k)} \left(-\frac{x}{\lambda} \right)={\bf S}_1 \,  { \bf \Lambda}_1 \,  {\bf B}^{(k) } \left(-\frac{x}{\lambda} \right)
\end{equation*}

\noindent
multiplying both sides by ${\bf S}_2$, we have
\begin{equation} \label{70}
{\bf S}_2 \,  {\bf \Xi}^{-1} \,  \hat{\bf D}_\xi^{(k)} (x|\lambda)= {\bf S}_2 \, {\bf S}_1 \,  { \bf \Lambda }_1 \, {\bf B}^{(k) } \left(-\frac{x}{\lambda} \right)= {\bf I} \, { \bf \Lambda }_1 \, {\bf B}^{(k) } \left(-\frac{x}{\lambda} \right)={ \bf \Lambda}_1 \, {\bf B}^{(k) } \left(-\frac{x}{\lambda} \right).
\end{equation}

\noindent
From (\ref{67}) and (\ref{70}) there is a contradiction.
\end{remark}

\noindent
We obtained the corrected relation in the following theorem as follows.
\begin{theorem}
For $m\in \mathbb{Z}, \; k\in \mathbb{N}$, we have
\begin{equation} \label{71}
\lambda^m B_m^{(k)} \left(k+ \frac{x}{\lambda} \right)= \sum_{n=0}^m \hat{D}_{n,\xi}^{(k)} (x|\lambda) \xi^{-n}  s_2 (m,n).
\end{equation}
\end{theorem}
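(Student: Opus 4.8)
\noindent
The plan is to mimic exactly the substitution argument used in the proof of Eq.~(\ref{59}), now starting from the generating function~(\ref{64}) for the twisted $\lambda$-Daehee polynomials of the second kind. First I would replace $t$ by $(e^t-1)/\xi$ throughout~(\ref{64}). The decisive simplification is that $1+\xi t$ collapses to $e^t$ under this substitution, so that $\log(1+\xi t)\mapsto t$, $(1+\xi t)^\lambda\mapsto e^{\lambda t}$, and $(1+\xi t)^x\mapsto e^{tx}$.

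On the right-hand side this turns the defining series into $\sum_{n=0}^\infty \hat{D}_{n,\xi}^{(k)}(x|\lambda)\,\xi^{-n}\,(e^t-1)^n/n!$. I would then invoke the Stirling generating function~(\ref{6}) to write $(e^t-1)^n = n!\sum_{m\geq n} s_2(m,n)\,t^m/m!$, substitute, and interchange the two summations to obtain $\sum_{m=0}^\infty \big(\sum_{n=0}^m \hat{D}_{n,\xi}^{(k)}(x|\lambda)\,\xi^{-n}\,s_2(m,n)\big)\,t^m/m!$, which already exhibits the right-hand side of~(\ref{71}).

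On the left-hand side the substitution produces $\big(\lambda t\,e^{\lambda t}/(e^{\lambda t}-1)\big)^k e^{tx}$; pulling the $e^{\lambda t}$ out of the $k$-th power rewrites this as $\big(\lambda t/(e^{\lambda t}-1)\big)^k e^{t(\lambda k+x)}$. Setting $u=\lambda t$ and comparing with the higher-order Bernoulli generating function~(\ref{1}) identifies this with $\sum_{m=0}^\infty \lambda^m B_m^{(k)}(k+x/\lambda)\,t^m/m!$. Equating the coefficients of $t^m$ on the two sides then yields~(\ref{71}).

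The step deserving the most care is the bookkeeping of the extra factor $(1+\xi t)^\lambda$ that distinguishes the second-kind generating function~(\ref{64}) from the first-kind one: it is precisely this factor that, after the substitution, contributes the $e^{\lambda t k}$ shifting the Bernoulli argument from $x/\lambda$ to $k+x/\lambda$. One must also track the powers of $\xi$ with care, since the only weight attached to $\hat{D}_{n,\xi}^{(k)}(x|\lambda)$ that survives the substitution is $\xi^{-n}$; this is exactly what corrects the spurious exponent $\xi^{-n-\lambda k-x}$ appearing in~(\ref{67}).
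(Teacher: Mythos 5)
Your proposal is correct and follows essentially the same route as the paper's own proof: the substitution $t\mapsto (e^t-1)/\xi$ in (\ref{64}), the use of the Stirling generating function (\ref{6}) with an interchange of summations on the right, and the identification of the left side with $\sum_{m}\lambda^m B_m^{(k)}(k+x/\lambda)\,t^m/m!$ via (\ref{1}). Your closing remarks on the role of the factor $(1+\xi t)^\lambda$ and the surviving weight $\xi^{-n}$ accurately capture exactly what the paper's argument does.
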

\begin{proof}
From Eq. (\ref{64}), replacing  $t$ by $(e^t-1)/\xi$ ,  we have
\begin{equation*}
\left(\frac{\lambda  \log⁡\left(1+ \frac{\xi(e^t-1)}{\xi}\right) \left(1+\frac{\xi(e^t-1)}{\xi}\right)^\lambda}{\left(1+\frac{\xi(e^t-1)}{\xi} \right)^\lambda-1} \right)^k \left(1+\frac{\xi(e^t-1)}{\xi} \right)^x =
\sum_{n=0}^\infty \hat{D}_{n,\xi}^{(k)} (x|\lambda) \frac{ (e^t-1)^n}{n! \xi^n},
\end{equation*}
\begin{eqnarray} \label{72}
\left( \frac{\lambda t}{e^{\lambda t}-1} \right)^k e^{(k \lambda+x)t} & = & \sum_{n=0}^\infty \hat{D}_{n,\xi}^{(k)} (x| \lambda)  \frac{(e^t-1)^n}{n! \xi^n},
\nonumber\\
\left(\frac{\lambda t}{e^{\lambda t}-1} \right)^k e^{\lambda t\left(k+ \frac{x}{\lambda} \right)} & = & \sum_{n=0}^\infty \hat{D}_{n,\xi}^{(k)} (x| \lambda)  \xi^{-n} \frac{(e^t-1)^n}{n!}.
\end{eqnarray}

\noindent
Substituting from Eq. (\ref{6}) into (\ref{72}), we have
\begin{eqnarray} \label{73}
\left(\frac{\lambda t}{(e^{\lambda t}-1} \right)^k e^{\lambda t\left(k+\frac{x}{\lambda} \right)} & = &
\sum_{n=0}^\infty \hat{D}_{n,\xi}^{(k)} (x|\lambda) \xi^{-n} \sum_{m=n}^\infty s_2 (m,n) \frac{ t^m}{m!}
\nonumber\\
&= & \sum_{m=0}^\infty \sum_{n=0}^m \hat{D}_{n,\xi}^{(k)} (x|\lambda) \xi^{-n}  s_2 (m,n) \frac{t^m}{m!}.
\end{eqnarray}

\noindent
From Eq. (\ref{1}) and (\ref{73}), we have
\begin{equation} \label{74}
\sum_{m=0}^\infty \lambda^m B_m^{(k)} \left(k+\frac{x}{\lambda} \right) \frac{ t^m}{m!}=
\sum_{m=0}^\infty \sum_{n=0}^m \hat{D}_{n,\xi}^{(k)} (x|\lambda) \xi^{-n}  s_2 (m,n) \frac{t^m}{m!}.
\end{equation}
Equating the coefficients of $t^m$ on both sides gives (\ref{71}). This completes the proof.
\end{proof}

\noindent
Moreover, by using Eq. (\ref{47}), we can represent Equation (\ref{71}), in the following matrix form.
\begin{equation} \label{75}
{\bf B}^{(k)} \left(-\frac{x}{\lambda}\right)= {\bf \Lambda}_1^{-1} \,  {\bf S}_2 \,  {\bf \Xi }^{-1} \,  \hat{ \bf D}_\xi^{(k)} (x|\lambda),
\end{equation}
where ${\bf \Lambda \,  B}^{(k) } \left(k+\frac{x}{\lambda}\right)={\bf \Lambda}_1 \,  {\bf B}^{(k)} \left(-\frac{x}{\lambda} \right)$.

\noindent
For example, if setting  $0\leq n \leq 3,\, 0 \leq k \leq n$, in (\ref{75}), we have
{\footnotesize
\begin{eqnarray*}
\left(
\begin{array}{cccc}
1 & 0 & 0 & 0 \\
0 & -\frac{1}{\lambda} & 0 & 0 \\
0 & 0 & \frac{1}{\lambda^2} & 0 \\
0 & 0 & 0 & -\frac{1}{\lambda^3}
\end{array}
\right)
\left(
\begin{array}{cccc}
1 & 0 & 0 & 0 \\
0 & 1 & 0 & 0 \\
0 & 1 & 1 & 0 \\
0 & 1 & 3 & 1
\end{array}
\right)
\left(
\begin{array}{cccc}
1 & 0 & 0 & 0 \\
0 & \frac{1}{\xi} & 0 & 0 \\
0 & 0 & \frac{1}{\xi^2} & 0 \\
0 & 0 & 0 & \frac{1}{\xi^3}
\end{array}
\right)\times
\; \; \; \; \; \; \; \; \; \; \; \; \; \; \; \; \; \; \; \; \; \; \; \; \; \; \; \; \; \; \;
\; \; \; \; \; \; \; \; \; \; \; \; \; \; \; \; \; \; \; \; \; \; \; \; \; \; \; \; \; \; \;
\\
\left(
\begin{array}{cccc}
1 & 1 & 1 & 1 \\
\xi x & \frac{\xi}{2} (\lambda+2x) & \xi(\lambda+x)& \frac{\xi}{2}(3\xi+2x)\\
\xi^2 x(x-1)& \xi^2 (\frac{\lambda^2}{6} + \lambda x- \frac{\lambda}{2} + x^2-x)& \xi^2 (\frac{5}{6} \lambda^2+2\lambda x-\lambda+x^2-x)& \xi^2 (2\lambda^2+3\lambda x-\frac{3}{2} \lambda + x^2-x)\\
\hat{D}_{3,\xi}^{(0)}(x|\lambda) & \hat{D}_{3,\xi}^{(1)}(x|\lambda) & \hat{D}_{3,\xi}^{(2)}(x|\lambda) & \hat{D}_{3,\xi}^{(3)}(x|\lambda)
\end{array}
\right)
\\
=
\left(
\begin{array}{cccc}
1 & 1 & 1 & 1 \\
- \frac{x}{\lambda} &- \frac{x}{\lambda}- \frac{1}{2} & - \frac{x}{\lambda}-1 &- \frac{x}{\lambda}-\frac{3}{2} \\
\frac{x^2}{\lambda^2} & \frac{x^2}{\lambda^2} + \frac{x}{\lambda}+ \frac{1}{6} & \frac{x^2}{\lambda^2} + \frac{2x}{\lambda}+ \frac{5}{6} & \frac{x^2}{\lambda^2} + \frac{3x}{\lambda}+2\\
-\frac{x^3}{\lambda^3} & -\frac{x}{2\lambda}-\frac{3x^2}{2\lambda^2}-\frac{x^3}{\lambda^3} & -\frac{5x}{2\lambda}-\frac{3x^2}{\lambda^2} -\frac{x^3}{\lambda^3} -\frac{1}{2} &- \frac{6x}{\lambda}-\frac{9x^2}{2\lambda^2}-\frac{x^3}{\lambda^3} -\frac{9}{4}.
\end{array}
\right)
\; \; \; \; \; \; \; \; \; \; \; \; \; \; \; \; \; \; \; \; \; \; \; \; \; \; \; \; \; \; \;
\end{eqnarray*}
}
where
\begin{eqnarray*}
\hat{D}_{3,\xi}^{(0)}(x|\lambda)
&= &\xi^3 x(x-1)(x-2), \; \hat{D}_{3,\xi}^{(1)}(x|\lambda)= -\frac{\xi^3}{2} (\lambda+2x-2)(\lambda+2x-x^2-\lambda x),
\\
\hat{D}_{3,\xi}^{(2)}(x|\lambda)&= &\frac{\xi^3}{2}(\lambda+x-1)(\lambda^2+4\lambda x-4\lambda +2x^2-4x),
\\
\hat{D}_{3,\xi}^{(3)}(x|\lambda)&= & \frac{\xi^3}{4}(3\lambda+2x-2)(3\lambda^2+6\lambda x-6\lambda+2x^2-4x).
\end{eqnarray*}


\bibliographystyle{plain}

\end{document}